\documentclass[11pt,reqno]{amsart}

\setlength{\textwidth}{5in}

\setlength{\textheight}{7.7in}
\usepackage{graphicx}
\usepackage{verbatim}
\usepackage[noadjust]{cite}
\vfuzz2pt 
\hfuzz2pt 
\newtheorem{thm}{Theorem}[section]
\newtheorem{theorem}[thm]{Theorem}
\newtheorem{cor}[thm]{Corollary}
\newtheorem{lem}[thm]{Lemma}

\newtheorem{prop}[thm]{Proposition}
\theoremstyle{definition}

\newtheorem*{definition}{Definition}

\theoremstyle{remark}
\newtheorem{rem}{Remark}[section]
\numberwithin{equation}{section}


\def\N{{\mathbb N}}
\def\R{{\mathbb R}}


\newcommand{\us}{u^{\ast}}
\newcommand{\vs}{v^{\ast}}
\newcommand{\ws}{{w^{\ast}}}
\newcommand{\Xs}{X^{\ast}}
\newcommand{\xs}{x^{\ast}}

\newcommand{\Ys}{Y^{\ast}}
\newcommand{\ys}{y^{\ast}}

\newcommand{\Zs}{Z^{\ast}}
\newcommand{\zs}{z^{\ast}}
\newcommand{\eps}{\varepsilon}


\begin{document}

\title[On convex combinations of slices]{On convex combinations of slices of the\\ unit ball in Banach spaces}%
\author{Rainis Haller}%
\author{Paavo Kuuseok}
\author{M\"art P\~oldvere}
\address{Institute of Mathematics and Statistics, University of Tartu,\newline J.~Liivi 2, 50409 Tartu, Estonia}%
\email{\tiny rainis.haller@ut.ee, paavo.kuuseok@gmail.com,
mart.poldvere@ut.ee}%
\thanks{The research was supported by institutional research funding IUT20-57 of the Estonian Ministry of Education and Research.}

\subjclass[2010]{Primary 46B20, 46B22}%
\keywords{Banach space, slice, convex combination of slices, relatively weakly open set}%

\begin{abstract}
We prove that the following three properties for a Banach space are all different from each other: every finite convex combination of slices of the unit ball is (1) relatively weakly open, (2) has nonempty interior in relative weak topology of the unit ball, and (3) intersects the unit sphere. In particular, the $1$-sum of two Banach spaces does not have property (1), but it has property (2) if both the spaces have property (1); the Banach space $C[0,1]$ does not have property (2), although it has property (3).
\end{abstract}
\maketitle
\section{Introduction}
All Banach spaces we consider are over the field $\mathbb R$ of real numbers and infinite-dimensional, if not stated otherwise. For a Banach space $X$, let $B_X$, $S_X$, and $X^\ast$ denote the unit ball, the unit sphere, and the dual of $X$, respectively.
By a \emph{slice} of $B_X$ we mean a set of the form
\[
S(x^\ast,\alpha\}=\{x\in B_X\colon x^\ast(x)>1-\alpha\},
\]
where $x^\ast\in S_{X^\ast}$ and $\alpha>0$. A \emph{convex combination of slices} of $B_X$ is a set of the form
\[
\sum_{i=1}^n\lambda_i S_i,
\]
where $n\in\mathbb N$, $\lambda_1,\dotsc,\lambda_n\geq 0$ with $\sum_{i=1}^n \lambda_i=1$, and $S_1,\dotsc,S_n$ are slices of $B_X$.

By Bourgain's lemma \cite[Lemma II.1]{ggms}, every nonempty relatively weakly open subset of the unit ball of a Banach space contains a convex combination of slices of the unit ball.
But the converse is not always true: a convex combination of slices of the unit ball in general need not have a nonempty relatively weakly open subset (see, e.g., \cite[Remark IV.5 p.~48]{ggms}. Although in some Banach spaces every convex combination of slices of the unit ball has nonempty interior in relative weak topology of the unit ball, e.g., in $C(K)$, for scattered compact $K$, in particular, in $c$, and in $c_0$, as it is shown in a very recent preprint \cite{AL}. In fact, \cite[Theorems~2.3 and 2.4]{AL} asserts that in $C(K)$, for scattered compact $K$, and in $c_0$, every convex combination of slices of the unit ball is relatively weakly open.

Every nonempty relatively weakly open subset of the unit ball of a Banach space intersects the unit sphere. Therefore a convex combination of slices has to intersect the unit sphere in order to have nonempty relative weak interior.
One can immediately verify that in the spaces $C[0,1]$ and $L_1[0,1]$ every convex combination of slices of the unit ball intersects the unit sphere; the same is true for $\ell_\infty$ (see, e.g., \cite[Examples~3.2 and 3.3]{AL}.

Let us consider the following three properties for a Banach space:
\begin{itemize}
    \item[(P1)] \emph{every convex combination of slices of the unit ball is open in the relative weak topology (of the unit ball);}
    \item[(P2)] \emph{every convex combination of slices of the unit ball has nonempty interior in the relative weak topology (of the unit ball);}
    \item[(P3)] \emph{every convex combination of slices of the unit ball intersects the unit sphere.}
\end{itemize}

Clearly, one has
\[
(\rm P1)\implies(\rm P2)\implies(\rm P3).
\]
In this paper we prove that these three properties are all different from each other. In fact, their difference appears by studying how they act under direct sums with absolute norms.

We also show that the space $C[0,1]$ does not have property (P2), although it has property (P3), according to the remark above. This, of course, also implies the difference of (P2) and (P3).

Recall that a norm $N$ on $\mathbb R^2$ is called \emph{absolute} (see \cite{bonsall_numerical_1973}) if
\[
N(a,b)=N(|a|,|b|)\qquad\text{for all $(a,b)\in\mathbb R^2$,}
\]
and \emph{normalized} if
\[
N(1,0)=N(0,1)=1.
\]
For example, the $p$-norm $\|\cdot\|_p$ is absolute and normalized for every $p\in[1,\infty]$. If $N$ is an absolute normalized norm on $\mathbb R^2$ (see \cite[Lemmata~21.1 and 21.2]{bonsall_numerical_1973}), then 

\begin{itemize}
\item $\|(a,b)\|_\infty\leq N(a,b)\leq \|(a,b)\|_1$ for all $(a,b)\in\mathbb R^2$;
\item if $(a,b),(c,d)\in\mathbb R^2$ with $|a|\leq |c|$ and $|b|\leq |d|$, then \[N(a,b)\leq N(c,d);\]
\item the dual norm $N^\ast$ on $\mathbb \R^2$ defined by 
\[
N^\ast(c,d)=\max_{N(a,b)\leq 1}(|ac|+|bd|) \qquad\text{for all $(c,d)\in\mathbb R^2$}
\]
is also absolute and normalized. Note that $(N^\ast)^\ast=N$.
\end{itemize}

If $X$ and $Y$ are Banach spaces and $N$ is an absolute normalized norm on $\R^2$, then we denote by $X\oplus_N Y$ the direct sum $X\oplus Y$ equipped with the norm
\[
\|(x,y)\|_N=N(\|x\|,\|y\|) \qquad\text{for all $x\in X$ and $y\in Y$}.
\]
In the special case where $N$ is the $\ell_p$-norm, we write $X\oplus_p Y$.
Note that $(X\oplus_N Y)^\ast=X^\ast\oplus_{N^\ast} Y^\ast$.
\begin{definition}
We say that an absolute normalized norm on $N$ on $\mathbb R^2$ has the \emph{positive strong diameter 2 property} if whenever $n\in \mathbb{N}$, positive $f_1,\dots,f_n\in S_{(\R^2, N^\ast)^\ast}$, $\alpha_1,\dots,\alpha_n>0$, and $\lambda_1,\dots, \lambda_n\geq 0$ with $\sum_{i=1}^n \lambda_i=1$ there are positive $(a_i,b_i)\in S(f_i, \alpha_i)$ such that
\[
N\Big(\sum_{i=1}^n \lambda_i(a_i,b_i)\Big)=1. 
\]
\end{definition}

\section{Stability results of properties (P1), (P2), and (P3)\\ under direct sums}
We start with the following observation, which is a our joint result with Trond A.~Abrahamsen and Vegard Lima.

\begin{prop}\label{P11sum}
Let $X$ and $Y$ be Banach spaces, and let $N$ be an absolute normalized norm different from the $\infty$-norm, i.e., $N(1,1)>1$. Then $X\oplus_N Y$ does not have property (P1).
\end{prop}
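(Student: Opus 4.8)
The plan is to exhibit a single convex combination of slices of $B_{X\oplus_N Y}$ that fails to be relatively weakly open, by finding a point in the combination at which every relative weak neighbourhood leaks outside. A natural candidate is the midpoint-type combination $\frac12 S_1+\frac12 S_2$, where $S_1$ and $S_2$ are slices determined by functionals that ``live on $X$'' and are chosen so that the combination contains an interior-looking point of the form $(x_0,0)$ with $\|x_0\|<1$, while every nearby point that moves a little into the $Y$-direction acquires norm $>1$ because $N(1,1)>1$. Concretely, I would take $\xs\in S_{\Xs}$ and set $f_1=f_2=(\xs,0)\in S_{(X\oplus_N Y)^\ast}=S_{\Xs\oplus_{N^\ast}\Ys}$ (using $N^\ast(1,0)=1$), with some fixed $\alpha\in(0,1)$, so that $S_1=S_2=S(f_1,\alpha)=\{(x,y)\in B_{X\oplus_N Y}\colon \xs(x)>1-\alpha\}$. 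One checks that this slice, and hence the ``combination'' $\frac12 S_1+\frac12 S_2$, contains points $(x,y)$ with $\|y\|$ as large as roughly the $N$-width of the ball in the second coordinate subject to $\|x\|$ close to $1$.

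The key computational step is the following: pick $x_0\in B_X$ with $\xs(x_0)=1-\alpha/2$ (possible since $\xs\in S_{\Xs}$, after rescaling so that $\|x_0\|$ is slightly less than $1$), and pick $y_0\in S_Y$. Since $N(1,1)>1$ there is $t>0$ with $N(\|x_0\|,t)\le 1$; then $(x_0,ty_0)\in S_1=S_2$ and so $z:=(x_0,ty_0)=\frac12(x_0,ty_0)+\frac12(x_0,ty_0)\in\frac12 S_1+\frac12 S_2$. Now consider the point $z':=(x_0,0)$. I claim $z'$ also lies in $\frac12 S_1+\frac12 S_2$ (write it as $\frac12(x_0,ty_0)+\frac12(x_0,-ty_0)$, both summands being in the slice), but that $z'$ is \emph{not} a relative weak interior point of the combination: for this I would show that every relative weak neighbourhood of $z'$ in $B_{X\oplus_N Y}$ contains a point of $B_{X\oplus_N Y}$ \emph{not} in $\frac12 S_1+\frac12 S_2$. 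The cleanest way is to produce, inside any such neighbourhood, a point whose second coordinate has norm strictly larger than what any element of $\frac12 S_1+\frac12 S_2$ can have given its first coordinate; here one uses that for $(u,v)=\frac12(x_1,y_1)+\frac12(x_2,y_2)$ with both $(x_i,y_i)\in S_1$, one has $\xs(u)>1-\alpha$, hence $\|u\|>1-\alpha$, hence by absoluteness and monotonicity of $N$ the second coordinate is constrained: $N(1-\alpha,\|v\|)\le N(\|u\|,\|v\|)\le 1$, so $\|v\|$ is bounded away from the value it could take near $z'$ if we only imposed weak-closeness in the $X$-coordinate and freedom in the (infinite-dimensional) $Y$-coordinate. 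Since weak neighbourhoods constrain only finitely many functionals, and $Y$ is infinite-dimensional, we can choose $v$ in the neighbourhood with $\|v\|$ strictly exceeding that bound.

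I expect the main obstacle to be the last point: making precise that a relative weak neighbourhood of $z'=(x_0,0)$ in the unit ball genuinely contains points $(x_0',v)$ with $\|v\|$ large. A basic weak neighbourhood is cut out by finitely many functionals $(\xs_j,\ys_j)\in\Xs\oplus_{N^\ast}\Ys$; I would use that $Y$ is infinite-dimensional to find a unit vector $v_0\in\bigcap_j\Ker\ys_j$, and then move from $z'$ along $(0,sv_0)$; this stays in the neighbourhood for all $s$ (the functionals don't see it) and stays in $B_{X\oplus_N Y}$ for $s$ up to some $s_{\max}>0$ with $N(\|x_0\|,s_{\max})=1$, which exceeds the threshold $\sup\{\|v\|\colon N(1-\alpha,\|v\|)\le 1\}$ provided $\alpha$ was chosen small enough relative to the ``gap'' produced by $N(1,1)>1$. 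Fixing $\alpha$ at the outset to make this inequality strict is the one quantitative estimate that needs care; everything else is bookkeeping with the two displayed properties of absolute normalized norms and the identification $(X\oplus_N Y)^\ast=\Xs\oplus_{N^\ast}\Ys$.
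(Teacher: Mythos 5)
There is a fatal flaw at the very start: you take $S_1=S_2=S((\xs,0),\alpha)$, and since a slice is a convex set, $\tfrac12 S_1+\tfrac12 S_2=\tfrac12 S_1+\tfrac12 S_1=S_1$. A slice is the intersection of $B_{X\oplus_N Y}$ with an open half-space, hence is always relatively weakly open, so your candidate combination can never witness the failure of (P1). Concretely, your point $z'=(x_0,0)$ satisfies $\xs(x_0)=1-\alpha/2>1-\alpha$ and $\|(x_0,0)\|_N=\|x_0\|\leq 1$, so $z'\in S_1$ and \emph{is} a relative weak interior point. The quantitative step breaks down for the same reason: since $\|x_0\|\geq\xs(x_0)=1-\alpha/2>1-\alpha$, monotonicity of $N$ gives
\[
s_{\max}=\sup\{s\colon N(\|x_0\|,s)\leq 1\}\leq\sup\{s\colon N(1-\alpha,s)\leq 1\},
\]
so every perturbation $(x_0,sv_0)$ you can fit inside the ball still satisfies the constraint $N(1-\alpha,\|v\|)\leq1$ that you derived for members of the combination; no choice of $\alpha$ repairs this, because any point of the combination necessarily has first coordinate of norm $>1-\alpha$, and so does your base point.

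The idea you are missing is that the slices must point in \emph{opposing} directions, so that their average contains a point of small norm that no single slice contains. The paper takes four slices determined by $(\xs,0)$, $(-\xs,0)$, $(0,\ys)$, $(0,-\ys)$ with $\eps$ chosen so that $N(1,1-\eps)>1$; their average $C$ contains $(0,0)$. Any basic weak neighbourhood of $(0,0)$ in the ball contains a point $(u,0)$ with $\|u\|=1$ (a weak neighbourhood of $0$ in $B_X$ meets $S_X$ since $X$ is infinite-dimensional). Writing $(u,0)=\tfrac14\sum_j(x_j,y_j)$ with $(x_j,y_j)\in S_j$ forces $\|x_j\|=1$ for all $j$ (from $1=\|u\|\leq\tfrac14\sum_j\|x_j\|\leq1$), while $(x_3,y_3)\in S_3$ forces $\|y_3\|>1-\eps$, whence $1\geq N(\|x_3\|,\|y_3\|)\geq N(1,1-\eps)>1$, a contradiction. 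Your instinct to exploit $N(1,1)>1$ via the monotonicity of $N$ and the richness of weak neighbourhoods in infinite dimensions is the right one, but it must be applied at a point of the combination lying deep inside the ball, which your choice of equal slices cannot produce.
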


\begin{proof}
Put $Z=X\oplus_N Y$, and let $x^\ast\in S_{X^\ast}$ and $y^\ast\in S_{Y^\ast}$. Fix $\varepsilon>0$ such that $N(1,1-\varepsilon)>1$. Consider the following four slices of $B_Z$:
\begin{align*}
S_1&=S((x^\ast,0),\varepsilon),\\
S_2&=S((-x^\ast,0),\varepsilon),\\
S_3&=S((0,y^\ast),\varepsilon),\\
S_4&=S((0,-y^\ast),\varepsilon).
\end{align*}
Define \[C=\frac14\big(S_1+S_2+S_3+S_4\big).\] We show that $C$ is not relatively weakly open in $B_Z$.

First, note that $(0,0)\in C$. To see this, choose $x\in B_X$ and $y\in B_Y$ with $x^\ast(x)>1-\varepsilon$ and $y^\ast(y)>1-\varepsilon$. Clearly, $(x,0)\in S_1$, $(-x,0)\in S_2$, $(0,y)\in S_3$, and $(0,-y)\in S_4$. Therefore 
\[(0,0)=\frac14\big((x,0)+(-x,0)+(0,y)+(0,-y)\big)\in C.\]

Suppose that $C$ is relatively weakly open. Then $(0,0)$ has an open basic neighbourhood $W\subset C$ in the relative weak topology of $B_X$, say
\[W=\{w\in B_Z\colon |z_i^\ast(w)|<\delta,\ i=1,\dotsc,n\},\]
where $n\in\mathbb N$, $z_i^\ast=(x_i^\ast,y_i^\ast)\in Z^\ast$, and $\delta>0$.
Define \[U=\{u\in B_X\colon |x_i^\ast(u)|<\delta,\ i=1,\dotsc,n\}.\] Then $U$ is a relatively weakly open neighborhood of $0\in X$; therefore $U$ intersects the unit sphere of $X$. Fix an $u\in S_X\cap U$. Then clearly $(u,0)\in W$. Since $W\subset C$, this implies that there exist $(x_j,y_j)\in S_j$, $j=1,\dotsc,4$, such that \[(u,0)=\frac14\big((x_1,y_1)+(x_2,y_2)+(x_3,y_3)+(x_4,y_4)\big).\] 
To obtain a contradiction, consider $(x_3,y_3)$. The conditions $\|u\|=1$ and $(x_3,y_3)\in S_3$ imply that $\|x_3\|=1$ and $\|y_3\|>1-\varepsilon$; hence
\[1= N(\|x_3\|,\|y_3\|)\geq N(1,1-\varepsilon)>1,\]
a contradiction.
\end{proof}

In contrast to property (P1), property (P3) is stable under $1$-sums.

\begin{prop}\label{P31sum}
Let $X$ and $Y$ be Banach spaces with property (P3), and let $N$ be an absolute normalized norm with the positive strong diameter 2 property. Then $X\oplus_N Y$ has property (P3).
\end{prop}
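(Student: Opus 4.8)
My plan is to reduce the problem to the plane $(\R^2,N)$, where the positive strong diameter $2$ property operates, and then to lift the resulting scalar data back into $X$ and $Y$ using their property (P3); the weights used in the two applications of (P3) have to be chosen in a particular way so that the lifted pieces do not partially cancel when the convex combination is formed.

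In detail, put $Z=X\oplus_N Y$ and let $C=\sum_{i=1}^n\lambda_i S(z_i^\ast,\alpha_i)$ be a convex combination of slices of $B_Z$, where $z_i^\ast=(x_i^\ast,y_i^\ast)\in S_{Z^\ast}=S_{X^\ast\oplus_{N^\ast}Y^\ast}$, so that $N^\ast(\|x_i^\ast\|,\|y_i^\ast\|)=1$. We may assume $\lambda_i>0$ and $\alpha_i\le1$ for all $i$, and (by a routine perturbation using the normalization of $N^\ast$) also $c_i:=\|x_i^\ast\|>0$ and $d_i:=\|y_i^\ast\|>0$ for all $i$; set $\hat{x}_i^\ast=x_i^\ast/c_i\in S_{X^\ast}$ and $\hat{y}_i^\ast=y_i^\ast/d_i\in S_{Y^\ast}$.

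The first step is to feed the positive functionals $f_i=(c_i,d_i)$, for which $N^\ast(c_i,d_i)=1$, the thicknesses $\alpha_i/2$, and the weights $\lambda_i$ into the positive strong diameter $2$ property of $N$. This produces positive $(a_i,b_i)$ with $N(a_i,b_i)\le1$ (hence $0\le a_i,b_i\le1$), with $c_ia_i+d_ib_i>1-\alpha_i/2$ (in particular $(a_i,b_i)\neq(0,0)$), and with $N\bigl(\sum_{i=1}^n\lambda_i(a_i,b_i)\bigr)=1$. Put $\beta_i=\gamma_i=\alpha_i/2$. Assuming $\sum_k\lambda_ka_k>0$ (if this sum vanishes then every $a_i=0$, and one simply takes the $X$-parts below to be $0$), apply property (P3) of $X$ to the convex combination $\sum_i\mu_i S(\hat{x}_i^\ast,\beta_i)$ of slices of $B_X$, where $\mu_i=\lambda_ia_i/\sum_k\lambda_ka_k$; this provides a point $u\in S_X$ of the form $u=\sum_i\mu_i w_i$ with $w_i\in S(\hat{x}_i^\ast,\beta_i)$. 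In the same way, property (P3) of $Y$, applied with the weights $\nu_i=\lambda_ib_i/\sum_k\lambda_kb_k$, provides $v=\sum_i\nu_i v_i\in S_Y$ with $v_i\in S(\hat{y}_i^\ast,\gamma_i)$.

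Now put $(x_i,y_i):=(a_iw_i,b_iv_i)$. Each $(x_i,y_i)\in B_Z$, because $N(a_i\|w_i\|,b_i\|v_i\|)\le N(a_i,b_i)\le1$ by monotonicity of $N$; and each $(x_i,y_i)\in S(z_i^\ast,\alpha_i)$, because
\[
x_i^\ast(x_i)+y_i^\ast(y_i)=a_ic_i\,\hat{x}_i^\ast(w_i)+b_id_i\,\hat{y}_i^\ast(v_i)>(c_ia_i+d_ib_i)(1-\alpha_i/2)>(1-\alpha_i/2)^2>1-\alpha_i.
\]
Moreover $\sum_{i=1}^n\lambda_i(x_i,y_i)=\bigl(\bigl(\sum_k\lambda_ka_k\bigr)u,\ \bigl(\sum_k\lambda_kb_k\bigr)v\bigr)$, whose norm in $Z$ equals $N\bigl(\sum_k\lambda_ka_k,\sum_k\lambda_kb_k\bigr)=N\bigl(\sum_i\lambda_i(a_i,b_i)\bigr)=1$; hence $\sum_i\lambda_i(x_i,y_i)\in C\cap S_Z$, as required. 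The point I expect to be the crux is precisely this last coordination: choosing the weights of the (P3) applications proportional to $\lambda_ia_i$ (respectively $\lambda_ib_i$) is what forces the identity $\sum_i\lambda_ia_iw_i=\bigl(\sum_k\lambda_ka_k\bigr)u$, so that the norm of the convex combination is pinned down by $N\bigl(\sum_i\lambda_i(a_i,b_i)\bigr)$ rather than merely bounded above by it. The rest — splitting each $\alpha_i$ between the strong-diameter-$2$ step and the thicknesses $\beta_i,\gamma_i$, and the degenerate cases ($\lambda_i=0$, vanishing component functionals, $\sum_k\lambda_ka_k=0$ or $\sum_k\lambda_kb_k=0$) — is routine bookkeeping.
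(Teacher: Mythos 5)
Your proposal is correct and follows essentially the same route as the paper's proof: apply the positive strong diameter 2 property of $N$ to the positive functionals $(\|x_i^\ast\|,\|y_i^\ast\|)$ with thicknesses $\alpha_i/2$, then apply (P3) in $X$ and $Y$ with weights proportional to $\lambda_ia_i$ and $\lambda_ib_i$ to the normalized component slices, and recombine. The only (cosmetic) differences are that you dispose of vanishing component functionals by a small perturbation where the paper replaces the corresponding slice by the whole ball, and you are slightly more explicit about the bookkeeping ($\alpha_i\leq1$, $N(a_i,b_i)\leq1$).
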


\begin{proof}
We follow an idea from \cite{HPP}. Put $Z=X\oplus_N Y$. Consider a convex combination of slices of $B_Z$, say $\sum_{i=1}^n\lambda_i S_i$, where $S_i=S((x_i^\ast,y_i^\ast),\alpha_i)$. Let
\[S_i^X=S(\frac{x_i^\ast}{\|x_i^\ast\|},\frac{\alpha_i}{2})\quad \text{if $x_i^\ast\not=0$,}\quad\qquad\text{and}\quad\qquad S_i^X=B_X\quad\text{if $x_i^\ast=0$,}\]
and
\[S_i^Y=S(\frac{y_i^\ast}{\|y_i^\ast\|},\frac{\alpha_i}{2})\quad\text{if $y_i^\ast\not=0$,}\quad\qquad \text{and}\quad\qquad S_i^Y=B_Y\quad\text{if $y_i^\ast=0$}.\]
Since $N$ has the positive strong diameter 2 property, there exist $a_i,b_i\geq0$ such that $N(a_i,b_i)=1$, $a_i\|x_i^\ast\|+b_i\|y_i^\ast\|\geq1-\alpha_i/2$, and $N(\sum_{i=1}^n\lambda_i(a_i,b_i))=1$. Define
\[\mu=\sum_{i=1}^n\lambda_ia_i\quad\text{and}\quad \nu=\sum_{i=1}^n\lambda_i b_i.\]

Assume first that $\mu\not=0$ and $\nu\not=0$.
Define 
\[\mu_i=\dfrac{\lambda_ia_i}{\mu}\quad\text{and}\quad\nu_i=\dfrac{\lambda_ib_i}{\nu}.\]
Since $X$ and $Y$ have property (P3), there exist
$x=\sum_{i=1}^n\mu_ix_i\in S_X$ and $y=\sum_{i=1}^n\nu_iy_i\in S_Y$, where $x_i\in S_i^X$ and $y_i\in S_i^Y$.
Now $(\mu x,\nu y)\in S_Z\cap \sum_{i=1}^n\lambda_i S_i$, because
\[
\|(\mu x,\nu y)\|_N=N(\mu,\nu)=1,
\]
$\mu x=\sum_{i=1}^n\lambda_i a_i x_i$, $\nu y=\sum_{i=1}^n \lambda_i b_i y_i$, and $(a_i x_i,b_i y_i)\in S_i$.

Consider now the case, where $\mu=0$. The case, where $\nu=0$ is similar and is therefore omitted. We can assume that $\lambda_i>0$ for every $i$. Since $\mu=0$, we have $a_i=0$ and $b_i=1$ for every $i$. Since $Y$ has property (P3), there exists $y\in S_Y\cap\sum_{i=1}^n \lambda_i S_i^Y$. Then clearly $(0,y)\in S_Z\cap\sum_{i=1}^n \lambda_i S_i$.
\end{proof}

The following theorem together with the above ones will lead to our main conclusion in this section.

\begin{theorem}\label{PP1sum}
Let $X$ and $Y$ be Banach spaces with the property that norm-one elements in convex combinations of open slices of the closed unit ball are interior points
of these combinations in the relative weak topology (of the closed unit ball).
Then also the $1$-sum $X\oplus_1 Y$ has the same property.
\end{theorem}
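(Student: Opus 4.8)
The plan is to prove the stability of the ``norm-one elements are weak interior points of convex combinations of slices'' property under $1$-sums by a direct computation, reducing the question in $Z=X\oplus_1 Y$ to the corresponding questions in $X$ and $Y$. So fix a convex combination $C=\sum_{i=1}^n\lambda_i S_i$ of slices $S_i=S((x_i^\ast,y_i^\ast),\alpha_i)$ of $B_Z$ and a norm-one element $z=(x,y)\in C\cap S_Z$; thus $\|x\|+\|y\|=1$. Write $z=\sum_{i=1}^n\lambda_i(u_i,v_i)$ with $(u_i,v_i)\in S_i$, i.e. $x_i^\ast(u_i)+y_i^\ast(v_i)>1-\alpha_i$ and $\|u_i\|+\|v_i\|\le 1$. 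The first step is the usual normalization trick (as in Proposition~\ref{P31sum}): perturb each $(u_i,v_i)$ slightly so that $\|u_i\|+\|v_i\|=1$ exactly, and moreover so that $\|x\|=\sum_i\lambda_i\|u_i\|$ and $\|y\|=\sum_i\lambda_i\|v_i\|$ — this can be arranged because $\|x\|+\|y\|=1=\sum_i\lambda_i(\|u_i\|+\|v_i\|)$ forces no slack when $z$ has norm one, so after scaling the pieces we get $\|u_i\|=\|x\|$ for all $i$ (and similarly in $Y$) whenever the relevant coordinate is nonzero; strictly, one gets $\sum_i\lambda_i(1-\|u_i\|-\|v_i\|)=0$ with all summands $\ge0$, hence each $\|u_i\|+\|v_i\|=1$, and then a further averaging/rescaling inside each slice makes $\|u_i\|=\|x\|$.

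Next, split into the generic case $0<\|x\|<1$ (so also $0<\|y\|<1$) and the degenerate cases $\|x\|=1$, $\|x\|=0$. In the generic case, set $p=\|x\|$, $q=\|y\|=1-p$, $\bar x=x/p\in S_X$, $\bar y=y/q\in S_Y$, $\bar u_i=u_i/p\in B_X$, $\bar v_i=v_i/q\in B_Y$. Then $\bar x=\sum_i\lambda_i\bar u_i$ is a norm-one element of the convex combination $C_X=\sum_i\lambda_i S_i^X$ of slices of $B_X$, where $S_i^X=S(x_i^\ast/\|x_i^\ast\|,\beta_i)$ (with $x_i^\ast\neq0$) is chosen — after replacing $u_i,v_i$ by points deep inside their slices, which we may do since interior-point conclusions are open conditions — so that $\bar u_i\in S_i^X$; similarly for $Y$. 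By hypothesis, $\bar x$ is a weak interior point of $C_X$ in $B_X$ and $\bar y$ is a weak interior point of $C_Y$ in $B_Y$: there are weakly open $V_X\ni\bar x$, $V_Y\ni\bar y$ with $V_X\subset C_X$, $V_Y\subset C_Y$.

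The heart of the argument is then to promote these to a weakly open neighbourhood of $z$ in $B_Z$ contained in $C$. I would take $W=\{(w_1,w_2)\in B_Z: |\,\|w_1\|-p\,|<\eta,\ w_1/\|w_1\|\in V_X,\ w_2/\|w_2\|\in V_Y\}$ — but since $w\mapsto\|w\|$ and $w\mapsto w/\|w\|$ are not weakly continuous, this must be replaced by something genuinely weakly open. The standard fix: choose norming functionals $g\in S_{X^\ast}$ with $g(\bar x)=1$ and $h\in S_{Y^\ast}$ with $h(\bar y)=1$, and define $W$ by the conditions ``$(g,0)(w)>p-\eta$, $(0,h)(w)>q-\eta$, together with the finitely many linear inequalities cutting out $V_X$ scaled by $p$ and $V_Y$ scaled by $q$, all applied to $w=(w_1,w_2)$.'' On such $w$ one has $\|w_1\|\ge g(w_1)>p-\eta$ and $\|w_2\|>q-\eta$, while $\|w_1\|+\|w_2\|\le1$, so both coordinate norms are within $2\eta$ of $p,q$; then $w_1/\|w_1\|$ is within $O(\eta)$ of satisfying the $V_X$-inequalities, so by shrinking $\eta$ (and taking $V_X,V_Y$ with a little room to spare, using that $\bar x,\bar y$ were interior) one gets $w_1/\|w_1\|\in C_X$, $w_2/\|w_2\|\in C_Y$, and finally reconstructs a representation of $w$ in $C$: write $w_1/\|w_1\|=\sum_i\lambda_i u_i'$, $w_2/\|w_2\|=\sum_i\lambda_i v_i'$ with $u_i'\in S_i^X$, $v_i'\in S_i^Y$, and check $(\|w_1\|u_i',\|w_2\|v_i')\in S_i$ — for this one uses $x_i^\ast(\|w_1\|u_i')+y_i^\ast(\|w_2\|v_i')\ge\|w_1\|\,\|x_i^\ast\|(1-\beta_i)+\|w_2\|\,\|y_i^\ast\|(1-\beta_i)$ and the relation $\|x_i^\ast\|=\|y_i^\ast\|=1$-type normalization, choosing $\beta_i$ small enough relative to $\alpha_i$ and the distances $\|u_i\|,\|v_i\|$ to $0$. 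This last bookkeeping — getting the slice parameters $\beta_i$ and the tolerance $\eta$ to line up so that the reconstructed point lands in $S_i$ — is the main obstacle and where care is needed; it is exactly the place where ``$1$-sum'' (as opposed to a general absolute norm) is used, since on $Z^\ast=X^\ast\oplus_\infty Y^\ast$ the functional norm is $\max(\|x_i^\ast\|,\|y_i^\ast\|)$ and the convex-combination estimate closes cleanly.

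Finally I would dispatch the degenerate cases. If $\|x\|=1$, $\|y\|=0$, then from $\sum_i\lambda_i(\|u_i\|+\|v_i\|)=1$ and $\sum_i\lambda_i\|u_i\|\ge\|x\|=1$ we get $v_i=0$ and $\|u_i\|=1$ for all $i$ with $\lambda_i>0$; moreover $x_i^\ast(u_i)>1-\alpha_i$ forces (essentially) $\|x_i^\ast\|>1-\alpha_i$, so the slice $S_i$ of $B_Z$ already contains the ``pure-$X$'' slice $S(x_i^\ast,\alpha_i')\times\{0\}$ of a suitable radius, and $x$ is a norm-one point of the corresponding convex combination of slices of $B_X$; by hypothesis it is a weak interior point there, and $(g_0,0)(w)>1-\eta$ plus the $X$-neighbourhood conditions give a weakly open $W\ni z$ in $B_Z$ with $W\subset C$ (the $Y$-coordinate being automatically small and absorbed into the slices since $\|x_i^\ast\|$ is close to $1$). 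The case $\|x\|=0$ is symmetric. This completes the proof modulo the $\varepsilon$-management in the generic case.
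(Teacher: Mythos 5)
Your high-level strategy is the same as the paper's (reduce to the component spaces, pin down the coordinate norms of nearby points with norming functionals, then reconstruct), but two of the steps you rely on are genuinely wrong, and the part you defer is the actual core of the proof. First, the normalization ``a further averaging/rescaling inside each slice makes $\|u_i\|=\|x\|$'' is impossible in general: take $\lambda_1=\lambda_2=\tfrac12$, $(u_1,v_1)=(u,0)$ and $(u_2,v_2)=(0,v)$ with $\|u\|=\|v\|=1$ and $y_1^\ast=0=x_2^\ast$; then $\|x\|=\tfrac12$ while $\|u_1\|=1$, and any $u_1'$ with $\|u_1'\|=\tfrac12$ has $x_1^\ast(u_1')\leq\tfrac12<1-\alpha_1$, so it leaves the slice. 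The correct move (the paper's) is to keep the $u_i$ and \emph{reweight}: $x/\|x\|=\sum_i\alpha_i\widehat u_i$ with $\alpha_i=\lambda_i\|u_i\|/\|x\|$ and $\widehat u_i=u_i/\|u_i\|$, applying the hypothesis to $\sum_i\alpha_iS_i^X$ rather than to $\sum_i\lambda_iS_i^X$. Second, your component slices $S(x_i^\ast/\|x_i^\ast\|,\beta_i)$ make the reconstruction fail: for $u_i'$ in such a slice the best estimate for the rebuilt piece is $x_i^\ast(\|w_1\|u_i')+y_i^\ast(\|w_2\|v_i')\geq(\|w_1\|\,\|x_i^\ast\|+\|w_2\|\,\|y_i^\ast\|)(1-\beta_i)$, and in the example above this is about $\tfrac12$ for $i=1$, far below $1-\alpha_1$ no matter how small $\beta_i$ is. The paper instead anchors the component slices at the given representation, $S_i^X=\{x\in B_X:\ x_i^\ast(x)>x_i^\ast(\widehat u_i)-\delta_i/2\}$ with $\delta_i$ the slack of $(u_i,v_i)$ in $S_i$, and rebuilds the piece as $(\|u_i\|u_i',\|v_i\|v_i')$ --- scaled by the individual norms $\|u_i\|,\|v_i\|$, not by $\|w_1\|,\|w_2\|$ --- so that its $z_i^\ast$-value exceeds $z_i^\ast(u_i,v_i)-\delta_i/2>1-\alpha_i$.

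Finally, the ``$\varepsilon$-management'' you postpone is not bookkeeping but the substantive half of the argument. For $w=(w_1,w_2)$ in your candidate neighbourhood you only control $\|w_1\|\in(\|x\|-\delta,\|x\|+\delta)$, and when $\|w_1\|=\|x\|+\varepsilon>\|x\|$ (forcing $\|w_2\|=\|y\|-\varepsilon_1$ with $0<\varepsilon\leq\varepsilon_1$) the vector $w_1$ is not itself a convex combination of the rebuilt pieces: the excess $\varepsilon\, w_1/\|w_1\|$ must be distributed over the $n$ pieces and compensated in the second coordinate, and one has to verify that each corrected piece still has $1$-norm at most $1$ (this is exactly where $\varepsilon\leq\varepsilon_1$ is used) and still satisfies the slice inequality (this costs an extra $2\delta$ against the margin $\delta_i/2$). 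None of this is present in your sketch, so the proof is incomplete even after the two repairs above.
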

\begin{proof}[Proof of Theorem]
Put $Z:=X\oplus_1 Y$, and let $n\in\N$, let
\[
S_1:=S(\zs_1,\gamma_1),\,\dotsc,\, S_n:=S(\zs_n,\gamma_n)\quad\text{ with $\zs_i=(\xs_i,\ys_i)\in S_{\Zs}$}
\]
be open slices of $B_Z$,
let $\lambda_1,\dotsc,\lambda_n\geq 0$ satisfy $\sum\limits_{i=1}^n{\lambda_i}=\nobreak1$,
and let $z=(x,y)=\sum\limits_{i=1}^n\lambda_i z_i$, where $z_i=(x_i,y_i)\in S_i$, satisfy $\|z\|=\|x\|+\|y\|=1$.
Notice that $x=\sum\limits_{i=1}^n\lambda_i x_i$ and  $y=\sum\limits_{i=1}^n\lambda_i y_i$, and
\[
\|x\|=\sum_{i=1}^n\lambda_i\|x_i\|
\quad\text{and}\quad
\|y\|=\sum_{i=1}^n\lambda_i\|y_i\|.
\]
We must find a finite subset $\mathcal{C}$ of $\Zs$ and a $\delta>0$ such that, for
\begin{equation}\label{eq: definition of W}
W:=\bigl\{w\in B_Z\colon\,|\ws(w-z)|<\delta\quad\text{for every $\ws\in\mathcal{C}$}\bigr\},
\end{equation}
one has $W\subset\sum\limits_{i=1}^n\lambda_i S_i$.

For every $i\in\{1,\dotsc,n\}$, putting $\delta_i=\zs_i(z_i)-(1-\gamma_i)$,
\[
\widehat{x}_i=
\begin{cases}
\dfrac{x_i}{\|x_i\|},&\quad\text{if $x_i\not=0$,}\\
0,&\quad\text{if $x_i=0$,}
\end{cases}
\qquad\qquad
\widehat{y}_i=
\begin{cases}
\dfrac{y_i}{\|y_i\|},&\quad\text{if $y_i\not=0$,}\\
0,&\quad\text{if $y_i=0$,}
\end{cases}
\]
\[
S^X_i:=\bigl\{x\in B_X\colon\,\xs_i(x)>\xs_i(\widehat{x}_i)-\tfrac{\delta_i}2\bigr\},
\quad
S^Y_i:=\bigl\{y\in B_Y\colon\,\ys_i(y)>\ys_i(\widehat{y}_i)-\tfrac{\delta_i}2\bigr\},
\]
observe that $S^X_i$ and $S^Y_i$ are slices of, respectively, $B_X$ and $B_Y$ with $\widehat{x}_i\in S^X_i$ and $\widehat{y}_i\in S^Y_i$,
and
\[
\bigl(\|x_i\|\,S^X_i\bigr)\times\bigl(\|y_i\|\,S^Y_i\bigr)\subset\bigl\{w\in B_Z\colon\,\zs_i(w)>1-\gamma_i+\tfrac{\delta_i}{2}\bigr\}\subset S_i.
\]
Indeed, whenever $u\in S^X_i$ and $v\in S^Y_i$, one has
\begin{align*}
\zs_i\bigl(\|x_i\|u,\|y_i\|v\bigr)
&=\|x_i\|\xs_i(u)+\|y_i\|\ys_i(v)\\
&>\|x_i\|\Bigl(\xs_i(\widehat{x}_i)-\frac{\delta_i}2\Bigr)+\|y_i\|\Bigl(\ys_i(\widehat{y}_i)-\frac{\delta_i}2\Bigr)\\
&=\xs_i(x_i)+\ys_i(y_i)-\bigl(\|x_i\|+\|y_i\|\bigr)\frac{\delta_i}2
=\zs_i(z_i)-\frac{\delta_i}2\\
&=1-\gamma_i+\frac{\delta_i}{2}.
\end{align*}

We only consider the case when both $\|x\|\not=0$ and $\|y\|\not=0$. (The case when $\|x\|=0$ or $\|y\|=0$ can be handled similarly and is, in fact, simpler.)

For every $i\in\{1,\dotsc,n\}$, letting
\[
\alpha_i:=\frac{\lambda_i\|x_i\|}{\|x\|}
\quad\text{and}\quad
\beta_i:=\frac{\lambda_i\|y_i\|}{\|y\|},
\]
one has $\alpha_i,\beta_i\geq0$, and $\sum\limits_{i=1}^n\alpha_i=\sum\limits_{i=1}^n\beta_i=1$.
Observing that
\begin{align*}
\frac{x}{\|x\|}=\sum_{i=1}^n\frac{\lambda_i\|x_i\|}{\|x\|}\,\widehat{x}_i\in\sum_{i=1}^n\alpha_i S^X_i
\quad\text{and}\quad
\frac{y}{\|y\|}=\sum_{i=1}^n\frac{\lambda_i\|y_i\|}{\|y\|}\,\widehat{y}_i\in\sum_{i=1}^n\beta_i S^Y_i,
\end{align*}
there are finite subsets $\mathcal{A}$ of $S_{\Xs}$ and $\mathcal{B}$ of $S_{\Ys}$, and a $\gamma\in(0,1)$ such that, for
\[
U:=\biggl\{u\in B_X\colon\,\Bigl|\us\Bigl(u-\frac{x}{\|x\|}\Bigr)\Bigr|<\gamma\quad\text{for every $\us\in\mathcal{A}$}\biggr\}
\]
and
\[
V:=\biggl\{v\in B_Y\colon\,\Bigl|\vs\Bigl(v-\frac{y}{\|y\|}\Bigr)\Bigr|<\gamma\quad\text{for every $\vs\in\mathcal{B}$}\biggr\},
\]
one has $U\subset\sum\limits_{i=1}^n\alpha_i S^X_i$ and $V\subset\sum\limits_{i=1}^n\beta_i S^Y_i$.

Choose $\xs\in S_{\Xs}$ and $\ys\in S_{\Ys}$ so that $\xs(x)=\|x\|$ and $\ys(y)=\|y\|$.
Let $W$ be defined by \eqref{eq: definition of W} where $\delta>0$ satisfies
\[
\delta<\frac{\gamma\min\{\|x\|,\|y\|\}}{2},\quad 
\qquad\text{and}\qquad
2\delta<\frac{\gamma_i}{2}\quad\text{for all $i\in\{1,\dotsc,n\}$.}
\]
and
\[
\mathcal{C}:=\bigl\{(\us,0),(0,\vs)\colon\,\us\in\mathcal{A}\cup\{\xs\},\,\vs\in\mathcal{B}\cup\{\ys\}\bigr\}.
\]
Now suppose that $w=(u,v)\in W$. Then
\[
\|u\|\geq\xs(u)=(\xs,0)(w)>(\xs,0)(z)-\delta=\xs(x)-\delta=\|x\|-\delta,
\]
and, similarly, $\|v\|>\|y\|-\delta$; thus
\[
1\geq\|u\|+\|v\|>\|u\|+\|y\|-\delta,
\]
whence $\|u\|<1-\|y\|+\delta=\|x\|+\delta$, and, similarly, $\|v\|<\|y\|+\delta$.

There are two alternatives:
\begin{itemize}
\item[(1)]
$\|u\|\leq\|x\|$ and $\|v\|\leq\|y\|$;
\item[(2)]
$\|u\|>\|x\|$ or $\|v\|>\|y\|$.
\end{itemize}

Let us consider the case (2). In this case, one may assume that $\|u\|=\|x\|+\eps$  and $\|v\|=\|y\|-\eps_1$ where $0<\eps\leq\eps_1<\delta$. Setting
\[
\widehat{u}:=\dfrac{u}{\|u\|},\quad
u_0:=\|x\|\widehat{u},\quad
\widehat{v}:=\dfrac{v}{\|v\|},\quad
v_0:=\|y\|\widehat{v},
\]
one has
\[
u=\|u\|\widehat{u}=(\|x\|+\eps)\widehat{u}=u_0+\eps\widehat{u}
\]
and
\[
v=\|v\|\widehat{v}=(\|y\|-\eps_1)\widehat{v}=v_0-\eps_1\widehat{v}=\Bigl(1-\frac{\eps_1}{\|y\|}\Bigr)v_0.
\]
Since $\dfrac{u_0}{\|x\|}\in S_X$ and, for every $\us\in\mathcal{A}$,
\begin{align*}
\Bigl|\us\Bigl(\dfrac{u_0}{\|x\|}-\frac{x}{\|x\|}\Bigr)\Bigr|
&\leq\frac1{\|x\|}\bigl(|\us(u-x)|+\eps|\us(\widehat{u})|\bigr)
<\frac{2\delta}{\|x\|}<\gamma,
\end{align*}
one has $\dfrac{u_0}{\|x\|}\in U$, thus there are $u_i\in S^X_i$, $i=1,\dotsc,n$, such that
\begin{equation}\label{eq: u_0=||x||sum alpha_i u_i}
u_0=\|x\|\sum_{i=1}^n\alpha_i u_i=\sum_{i=1}^n\lambda_i\|x_i\|u_i.
\end{equation}
Similarly, since $\dfrac{v_0}{\|y\|}\in S_Y$ and, for every $\vs\in\mathcal{B}$,
\begin{align*}
\Bigl|\vs\Bigl(\dfrac{v_0}{\|y\|}-\frac{y}{\|y\|}\Bigr)\Bigr|
&\leq\frac1{\|y\|}\bigl(|\vs(v-y)|+\eps_1|\vs(\widehat{v})|\bigr)
<\frac{2\delta}{\|y\|}<\gamma,
\end{align*}
one has $\dfrac{v_0}{\|y\|}\in V$, thus there are $v_i\in S^Y_i$, $i=1,\dotsc,n$, such that
\begin{equation}\label{eq: v_0=||y||sum beta_i v_i}
v_0=\|y\|\sum_{i=1}^n\beta_i v_i=\sum_{i=1}^n\lambda_i\|y_i\|v_i.
\end{equation}
Now,
\[
v=\Bigl(1-\frac{\eps_1}{\|y\|}\Bigr)v_0=\sum_{i=1}^n\lambda_i\Bigl(\|y_i\|-\frac{\eps_1\|y_i\|}{\|y\|}\Bigr)v_i
\]
and
\[
u=u_0+\eps\widehat{u}=\sum_{i=1}^n\lambda_i\|x_i\|u_i+\biggl(\sum_{i=1}^n\frac{\lambda_i\|y_i\|}{\|y\|}\biggr)\eps\widehat{u}
=\sum_{i=1}^n\lambda_i\biggl(\|x_i\|u_i+\frac{\eps\|y_i\|}{\|y\|}\widehat{u}\biggr).
\]
Thus, setting
\[
\widetilde{u}_i:=\dfrac{\eps\|y_i\|}{\|y\|}\widehat{u},\quad
\widetilde{v}_i:=\dfrac{\eps_1\|y_i\|}{\|y\|}v_i,\quad\text{and}\quad
w_i=\bigl(\|x_i\|u_i+\widetilde{u}_i,\|y_i\|v_i-\widetilde{v}_i\bigr),
\]
one has
\begin{align*}
(u,v)&=\sum_{i=1}^n\lambda_i\biggl(\|x_i\|u_i+\frac{\eps\|y_i\|}{\|y\|}\widehat{u},\Bigl(\|y_i\|-\frac{\eps_1\|y_i\|}{\|y\|}\Bigr)v_i\biggr)\\
&=\sum_{i=1}^n\lambda_i\bigl(\|x_i\|u_i+\widetilde{u}_i,\|y_i\|v_i-\widetilde{v}_i\bigr)
=\sum\limits_{i=1}^n\lambda_i w_i,
\end{align*}
and it remains to observe that $w_i\in S_i$ for every $i\in\{1,\dotsc,n\}$.
Indeed,
\begin{align*}
\|w_i\|&=\bigl\|\|x_i\|u_i+\widetilde{u}_i\bigr\|+\|y_i\|-\frac{\eps_1\|y_i\|}{\|y\|}\\
&\leq\|x_i\|+\|\widetilde{u}_i\|+\|y_i\|-\frac{\eps_1\|y_i\|}{\|y\|}
=1+\dfrac{\eps\|y_i\|}{\|y\|}-\frac{\eps_1\|y_i\|}{\|y\|}\leq1
\end{align*}
and
\begin{align*}
\zs_i(w_i)
&=\zs_i\bigl(\|x_i\|u_i,\|y_i\|v_i\bigr)+\zs_i(\widetilde{u}_i,-\widetilde{v}_i)\\
&>\zs_i(z_i)-\frac{\delta_i}2-\|\widetilde{u}_i\|-\|\widetilde{v}_i\|
>1-\gamma_i+\frac{\delta_i}2-2\delta
>1-\gamma_i.
\end{align*}

In the case (1), one immediately verifies that $\dfrac{u}{\|x\|}\in U$ and $\dfrac{v}{\|y\|}\in V$, thus one has the representations \eqref{eq: u_0=||x||sum alpha_i u_i}
and \eqref{eq: v_0=||y||sum beta_i v_i} with $u_i\in S^X_i$ and $v_i\in S^Y_i$ for $u_0=u$ and $v_0=v$, and it follows that
\[
(u,v)=\sum_{i=1}^n\lambda_i\bigl(\|x_i\|u_i,\|y_i\|v_i\bigr)
\in\sum_{i=1}^n\lambda_i\Bigl(\bigl(\|x_i\|\,S^X_i\bigr)\times\bigl(\|y_i\|\,S^Y_i\bigr)\Bigr)
\subset\sum_{i=1}^n\lambda_i S_i.
\]
\end{proof}

The next result follows easily from Proposition~\ref{P31sum} and Theorem~\ref{PP1sum}.

\begin{prop}\label{P21sum}
Let $X$ and $Y$ be Banach spaces with property (P1). Then $X\oplus_1 Y$ has property (P2).
\end{prop}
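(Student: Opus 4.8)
The plan is to deduce the proposition from the two preceding results, using property (P1) of $X$ and $Y$ in two separate ways. First I record that each of $X$ and $Y$ has property (P3): this is immediate from the chain $(\mathrm{P1})\Rightarrow(\mathrm{P2})\Rightarrow(\mathrm{P3})$ noted in the introduction. Secondly, I observe that each of $X$ and $Y$ satisfies the hypothesis of Theorem~\ref{PP1sum}, namely that every norm-one element of a convex combination of open slices of the closed unit ball is an interior point of that combination in the relative weak topology. This too is immediate from (P1): if the whole combination is relatively weakly open, then \emph{every} point of it — a fortiori every norm-one point — lies in its relative weak interior.

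The only genuine computation is to check that the $1$-norm $N=\|\cdot\|_1$ on $\R^2$ has the positive strong diameter 2 property, so that Proposition~\ref{P31sum} is applicable with $N=\|\cdot\|_1$. Here $N^\ast=\|\cdot\|_\infty$, so a positive functional $f_i$ occurring in the definition has the form $f_i=(c_i,d_i)$ with $c_i,d_i\ge 0$ and $\max\{c_i,d_i\}=1$, and $S(f_i,\alpha_i)$ is a slice of $B_{(\R^2,\|\cdot\|_1)}$. Given such $f_i$, positive $\alpha_i$, and $\lambda_i\ge 0$ with $\sum_i\lambda_i=1$, the requirement $N\bigl(\sum_i\lambda_i(a_i,b_i)\bigr)=\sum_i\lambda_i(a_i+b_i)=1$ together with $a_i+b_i=N(a_i,b_i)\le 1$ forces $a_i+b_i=1$ for every $i$ with $\lambda_i>0$; for such $i$ one simply takes $(a_i,b_i)=(1,0)$ when $c_i=1$ and $(a_i,b_i)=(0,1)$ when $d_i=1$, for which $c_ia_i+d_ib_i=1>1-\alpha_i$, so $(a_i,b_i)\in S(f_i,\alpha_i)$; for indices with $\lambda_i=0$ any such admissible choice works. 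Hence $\|\cdot\|_1$ has the positive strong diameter 2 property.

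Putting the pieces together: set $Z=X\oplus_1 Y$. By Proposition~\ref{P31sum} (with $N=\|\cdot\|_1$, using that $X$ and $Y$ have (P3)), $Z$ has property (P3); by Theorem~\ref{PP1sum} (using that $X$ and $Y$ satisfy its hypothesis), every norm-one element of a convex combination of open slices of $B_Z$ is a relative weak interior point of that combination. Now let $C=\sum_{i=1}^n\lambda_i S_i$ be an arbitrary convex combination of slices of $B_Z$. Property (P3) of $Z$ provides a point $z\in C$ with $\|z\|=1$, and the conclusion of Theorem~\ref{PP1sum} then places $z$ in the relative weak interior of $C$; thus $C$ has nonempty relative weak interior, and since $C$ was arbitrary, $Z$ has property (P2). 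I do not anticipate a real obstacle, since everything substantial is already packaged in Proposition~\ref{P31sum} and Theorem~\ref{PP1sum}; the only points needing care are the two distinct uses of (P1) for $X$ and $Y$ and the elementary verification above for the $1$-norm.
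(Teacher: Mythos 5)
Your proof is correct and follows exactly the paper's route: Proposition~\ref{P31sum} supplies a norm-one point of the convex combination, and Theorem~\ref{PP1sum} upgrades it to a relative weak interior point. The only difference is that you explicitly verify the hypotheses the paper leaves implicit --- in particular that $\|\cdot\|_1$ has the positive strong diameter 2 property --- and your verification is valid.
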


\begin{proof}
Let $C$ be a convex combination of slices of the unit ball of $X\oplus_1 Y$. By Proposition~\ref{P31sum}, there exists a norm-one $z\in C$. By Theorem~\ref{PP1sum}, the point $z$ in an interior point of $C$ in the relative weak topology of the unit ball.
\end{proof}

Propositions~\ref{P11sum} and \ref{P21sum} immediately imply our main result in this section.

\begin{cor}
Properties (P1) and (P2) are different.
\end{cor}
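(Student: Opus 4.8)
The plan is to exhibit a single Banach space witnessing that (P2) does not imply (P1); since (P1) $\Rightarrow$ (P2) is trivial, this suffices to show the two properties are genuinely different. The natural candidate is a $1$-sum $X\oplus_1 Y$ of two spaces each having property (P1), because Proposition~\ref{P21sum} guarantees such a sum has (P2), while Proposition~\ref{P11sum} (applied with $N=\|\cdot\|_1$, for which $N(1,1)=2>1$) guarantees it fails (P1).

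First I would recall a concrete space enjoying property (P1). As noted in the introduction, it follows from \cite[Theorems~2.3 and 2.4]{AL} that $c_0$ (and, more generally, $C(K)$ for scattered compact $K$) has property (P1): every convex combination of slices of its unit ball is relatively weakly open. Taking $X=Y=c_0$, Proposition~\ref{P21sum} then yields that $c_0\oplus_1 c_0$ has property (P2).

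Next I would invoke Proposition~\ref{P11sum} with the norm $N=\|\cdot\|_1$ on $\R^2$, which is absolute, normalized, and satisfies $N(1,1)=2>1$, i.e.\ it is different from the $\infty$-norm. This gives that $c_0\oplus_1 c_0$ does \emph{not} have property (P1). Combining the two observations, $c_0\oplus_1 c_0$ is a space with property (P2) but without property (P1), so the implication $(\mathrm{P2})\implies(\mathrm{P1})$ fails; together with the trivial implication $(\mathrm{P1})\implies(\mathrm{P2})$, this shows (P1) and (P2) are different.

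There is essentially no technical obstacle here: the work has already been done in Propositions~\ref{P11sum} and \ref{P21sum}. The only external ingredient is the fact that $c_0$ has property (P1), which is quoted from \cite{AL}; alternatively one could use any other space known to have (P1). The argument is then a one-line deduction.
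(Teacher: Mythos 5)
Your proposal is correct and follows exactly the paper's route: the corollary is deduced by combining Proposition~\ref{P11sum} (failure of (P1) for $1$-sums) with Proposition~\ref{P21sum} (validity of (P2) for $1$-sums of (P1)-spaces), using a space such as $c_0$ known to have (P1). The only difference is that you make the witness $c_0\oplus_1 c_0$ explicit, which the paper leaves implicit.
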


We now turn to $\infty$-sums.

Property (P1) is stable under $\infty$-sums.

\begin{prop}\label{P1infty}
Let $X$ and $Y$ be Banach spaces. Then $X$ and $Y$ have property  (P1) if and only if $X\oplus_\infty Y$ has property (P1).
\end{prop}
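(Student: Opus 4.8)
I would prove the two implications separately. For the easier one, suppose $Z:=X\oplus_\infty Y$ has (P1); by symmetry it is enough to show $X$ has (P1). Let $C_X=\sum_{i=1}^n\lambda_i S(\xs_i,\alpha_i)$ be a convex combination of slices of $B_X$. Since $\Zs=\Xs\oplus_1\Ys$, each $(\xs_i,0)$ lies in $S_{\Zs}$ and $S\bigl((\xs_i,0),\alpha_i\bigr)=S(\xs_i,\alpha_i)\times B_Y$; hence $\sum_{i=1}^n\lambda_i S\bigl((\xs_i,0),\alpha_i\bigr)=C_X\times B_Y$ is a convex combination of slices of $B_Z$, and so relatively weakly open by hypothesis. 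Pulling this back along the embedding $x\mapsto(x,0)$ of $B_X$ into $B_Z$ (which is weak-to-weak continuous, being the restriction of a bounded operator) and using $0\in B_Y$ recovers $C_X$ as a relatively weakly open subset of $B_X$.

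For the converse, suppose $X$ and $Y$ have (P1). The plan is to show that every point of a given convex combination of slices $C=\sum_{i=1}^n\lambda_i S_i$ of $B_Z$, where $S_i=S\bigl((\xs_i,\ys_i),\alpha_i\bigr)$ with $\|\xs_i\|+\|\ys_i\|=1$, has a relatively weakly open neighbourhood inside $C$. I would first record two facts. (i) Since $\Zs=\Xs\oplus_1\Ys$, the weak topology of $Z$ is generated by the functionals $(\xs,0)$ and $(0,\ys)$ alone, hence equals the product of the weak topologies of $X$ and $Y$; as $B_Z=B_X\times B_Y$, the relative weak topology of $B_Z$ is the product of those of $B_X$ and $B_Y$, so a product of relatively weakly open sets is relatively weakly open in $B_Z$. (ii) As the spaces are nontrivial, $B_X$ is itself a slice (for instance $B_X=S(g,3)$ for any $g\in S_{\Xs}$, since $g(u)\geq-1>-2$ on $B_X$), and likewise $B_Y$; so a ``convex combination of slices'' in which some slices happen to equal all of $B_X$ or all of $B_Y$ is still a genuine convex combination of slices, and (P1) of $X$, resp.\ $Y$, applies to it.

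Next, fixing $z=(x,y)\in C$ and, after discarding null weights, a representation $z=\sum_{i=1}^n\lambda_i(x_i,y_i)$ with $(x_i,y_i)\in S_i$, I would set $\delta_i:=\xs_i(x_i)+\ys_i(y_i)-(1-\alpha_i)>0$ and
\[
T_i^X:=\{u\in B_X\colon\xs_i(u)>\xs_i(x_i)-\tfrac{\delta_i}{2}\},\qquad
T_i^Y:=\{v\in B_Y\colon\ys_i(v)>\ys_i(y_i)-\tfrac{\delta_i}{2}\}
\]
(read as $B_X$ when $\xs_i=0$ and as $B_Y$ when $\ys_i=0$). When $\xs_i\neq0$ one has $\xs_i(x_i)-\delta_i/2<\|\xs_i\|$, so $T_i^X$ is a genuine slice, and in the degenerate case it is a slice by (ii); similarly for $T_i^Y$. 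One checks that $x_i\in T_i^X$, $y_i\in T_i^Y$, and---on adding the two defining inequalities, which is precisely where the slack $\delta_i$ is used up---that $T_i^X\times T_i^Y\subseteq S_i$. Consequently $z\in C_X\times C_Y$, where $C_X:=\sum_{i=1}^n\lambda_i T_i^X$ and $C_Y:=\sum_{i=1}^n\lambda_i T_i^Y$ are convex combinations of slices of $B_X$ and $B_Y$, hence relatively weakly open by (P1) of $X$ and $Y$; by (i), $C_X\times C_Y$ is then a relatively weakly open neighbourhood of $z$, and it lies in $C$ since $C_X\times C_Y=\sum_{i=1}^n\lambda_i(T_i^X\times T_i^Y)\subseteq\sum_{i=1}^n\lambda_i S_i=C$. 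As $z\in C$ was arbitrary, $C$ is relatively weakly open.

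The one genuine obstacle is that a slice $S_i$ of $B_Z=B_X\times B_Y$ is \emph{not} a product of slices---its defining inequality couples the two coordinates---so $C$ need not be a product and cannot be treated coordinate by coordinate on the nose. The remedy is to be content with the one-sided inclusion $T_i^X\times T_i^Y\subseteq S_i$, which still yields a relatively weakly open neighbourhood of any prescribed point of $C$ inside $C$; facts (i) and (ii) then reduce the whole matter to property (P1) of the two summands.
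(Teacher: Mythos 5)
Your proposal is correct and follows essentially the same route as the paper's proof: split each slice $S_i$ of $B_{X\oplus_\infty Y}$ by distributing the slack $\delta_i$ between two coordinate slices whose product sits inside $S_i$, apply (P1) in each summand, and use that the relative weak topology of $B_Z=B_X\times B_Y$ is the product topology; the converse via $C\times B_Y$ is also the paper's argument. Your extra care about the degenerate case $\xs_i=0$ (reading $T_i^X$ as $B_X$, which is itself a slice) is a point the paper passes over silently.
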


\begin{proof}
Set $Z:=X\oplus_\infty Y$. Assume that $X$ and $Y$ have property (P1). Let $n\in\mathbb N$ and consider the slices 
\[
S_1=S(z_1^\ast,\alpha_1),\dotsc, S_n=S(z_n^\ast,\alpha_n),
\]
where $z_i^\ast=(x_i^\ast,y_i^\ast)\in S_{Z^\ast}$.
Let $\lambda_1,\dotsc,\lambda_n\geq0$ satisfy $\sum_{i=1}^n\lambda_i=1$.
We show that $\sum_{i=1}^n \lambda_i S_i$ is relatively weakly open. Fix $z=\sum_{i=1}^n \lambda_i z_i\in\sum_{i=1}^n \lambda_i S_i$, where $z_i=(x_i,y_i)\in S_i$.
Choose an $\varepsilon>0$ such that
\[z_i^\ast(z_i)-2\varepsilon>1-\alpha_i\qquad\text{for all $i\in\{1,\dotsc,n\}$}.\]
For every $i\in\{1,\dotsc,n\}$, define
\[S_i^X=\{u\in B_X\colon x_i^\ast(u)>x_i^\ast(x_i)-\varepsilon\}\]
and
\[S_i^Y=\{v\in B_Y\colon y_i^\ast(v)>y_i^\ast(y_i)-\varepsilon\}.\]
Observe that $S_i^X$ and $S_i^Y$ are slices of, respectively, $B_X$ and $B_Y$, and $x_i\in S_i^X$, $y_i\in S_i^Y$, and $S_i^X\times S_i^Y\subset S_i$. It follows that
\begin{align*}
z=\sum_{i=1}^n\lambda_i z_i&=\big(\sum_{i=1}^n\lambda_i x_i,\sum_{i=1}^n\lambda_i y_i\big)\\&\in \Big(\sum_{i=1}^n \lambda_i S_i^X\Big)\times \Big(\sum_{i=1}^n\lambda_i S_i^Y\Big)\\&=\sum_{i=1}^n\lambda_i \big(S_i^X\times S_i^Y\big)\\&\subset \sum_{i=1}^n \lambda_i S_i.
\end{align*}
Since $\Big(\sum_{i=1}^n \lambda_i S_i^X\Big)\times \Big(\sum_{i=1}^n\lambda_i S_i^Y\Big)$ is relatively weakly open in $B_Z$, we are done.

Assume now that $Z$ has property (P1). We only show that $X$ has property (P1), similarly one obtains that $Y$ has property (P1). Let $C$ be a convex combination of slices of $B_X$. Then $D:=C\times B_Y$ is a convex combination of slices of $B_Z$; therefore $D$ is relatively weakly open. Thus $C$ is relatively weakly open in $B_X$.
\end{proof}

The behavior of property (P2) under $\infty$-sums is the same as that of property (P1).
\begin{prop}\label{P2infty}
Let $X$ and $Y$ be Banach spaces. Then $X$ and $Y$ have property (P2) if and only if $X\oplus_\infty Y$ has property (P2).
\end{prop}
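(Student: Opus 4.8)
The plan is to mirror the structure of the proof of Proposition~\ref{P1infty} for the ``interior points'' setting, using Theorem~\ref{PP1sum} as the archetype. So suppose first that $X$ and $Y$ have property (P2), set $Z:=X\oplus_\infty Y$, and let $C=\sum_{i=1}^n\lambda_i S_i$ be a convex combination of slices of $B_Z$, with $S_i=S((\xs_i,\ys_i),\alpha_i)$. The first step is to reduce to the product structure of slices in an $\infty$-sum: exactly as in Proposition~\ref{P1infty}, for a fixed decomposition $z=\sum_i\lambda_i z_i$ with $z_i=(x_i,y_i)\in S_i$ and a small $\eps>0$, the slices $S^X_i:=\{u\in B_X\colon\xs_i(u)>\xs_i(x_i)-\eps\}$ and $S^Y_i:=\{v\in B_Y\colon\ys_i(v)>\ys_i(y_i)-\eps\}$ satisfy $S^X_i\times S^Y_i\subset S_i$, hence $\bigl(\sum_i\lambda_i S^X_i\bigr)\times\bigl(\sum_i\lambda_i S^Y_i\bigr)\subset C$. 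The point is that $\sum_i\lambda_i S^X_i$ and $\sum_i\lambda_i S^Y_i$ are convex combinations of slices of $B_X$ and $B_Y$; by property (P2) each has nonempty relative weak interior, and a product of two (relatively weakly) open sets is open in the product weak topology of $B_Z$. So $C$ contains a nonempty relatively weakly open subset of $B_Z$, which is precisely property (P2) for that combination. (Note this direction does not even need the full force of the ``norm-one interior point'' machinery.)

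For the converse, assume $Z=X\oplus_\infty Y$ has property (P2); I will show $X$ has it (the argument for $Y$ being symmetric). Given a convex combination $C=\sum_i\lambda_i S_i^X$ of slices of $B_X$, form $D:=C\times B_Y$. The key observation, used already in Proposition~\ref{P1infty}, is that $D$ is itself a convex combination of slices of $B_Z$: writing $S_i^X=S(\xs_i,\alpha_i)$, one has $C\times B_Y=\sum_i\lambda_i\bigl(S_i^X\times B_Y\bigr)=\sum_i\lambda_i S((\xs_i,0),\alpha_i)$ since $(\xs_i,0)\in S_{Z^\ast}$ and $S((\xs_i,0),\alpha_i)=S_i^X\times B_Y$. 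By (P2) for $Z$, $D$ contains a nonempty set $W$ that is open in the relative weak topology of $B_Z$; shrinking $W$, we may take it to be a basic neighbourhood determined by finitely many functionals $(\us_k,\vs_k)\in Z^\ast$. Projecting: since $W\subset C\times B_Y$ is nonempty, it meets some slice $\{w\in B_Z\colon\vs(w)>1-\beta\}$-free ``fiber''; more simply, fix a point $(u_0,v_0)\in W$ and consider $W':=\{u\in B_X\colon |\us_k(u,v_0)-\us_k(u_0,v_0)|<\delta$ for all $k\}$. This $W'$ is a relatively weakly open neighbourhood of $u_0$ in $B_X$, and $W'\times\{v_0\}\subset W\subset C\times B_Y$ forces $W'\subset C$. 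Hence $C$ has nonempty relative weak interior.

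The main obstacle, and the one place where care is genuinely needed, is the converse projection step: one must ensure that the relatively weakly open set $W\subset B_Z$ actually ``contains a horizontal slab'', i.e.\ that after fixing the second coordinate $v_0$ the resulting set in $B_X$ is both nonempty and relatively weakly open and still sits inside $C$. Nonemptiness is immediate from $(u_0,v_0)\in W$; that $W'\times\{v_0\}$ lies in $W$ is clear from the defining inequalities once $\delta$ is chosen at most the ``slack'' of $(u_0,v_0)$ in $W$; and $W'\subset C$ follows because $W\subset C\times B_Y$ and the first coordinate of any point of $W'\times\{v_0\}$ is then in $C$. The only subtlety is that $v_0$ must lie in $B_Y$ with $\|v_0\|\le1$, which is automatic; no norm-one condition on $v_0$ is required because $D=C\times B_Y$ already contains the full fiber over each point of $C$. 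Thus the argument is essentially the same bookkeeping as in Proposition~\ref{P1infty}, with ``relatively weakly open'' replaced by ``has nonempty relative weak interior'' throughout, and the only new verification is that passing to a fiber preserves the interior-point property.
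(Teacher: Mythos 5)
Your proof is correct and follows essentially the same route as the paper: in the forward direction you reduce to a product $\bigl(\sum_i\lambda_i S^X_i\bigr)\times\bigl(\sum_i\lambda_i S^Y_i\bigr)\subset\sum_i\lambda_i S_i$ and apply (P2) to each factor (the paper uses the normalized slices $S(\xs_i/\|\xs_i\|,\alpha_i)$ rather than your point-based ones, but this is immaterial), and in the converse you use $D=C\times B_Y$ exactly as the paper does. Your explicit fiber argument showing that a relatively weakly open subset of $C\times B_Y$ yields a relatively weakly open subset of $C$ is a detail the paper leaves implicit, and it is carried out correctly.
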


\begin{proof}
Set $Z=X\oplus_\infty Y$. Assume that $X$ and $Y$ have property (P2). Consider a convex combination of slices of $B_Z$, say $\sum_{i=1}^n\lambda_i S_i$, where $S_i=S((x_i^\ast,y_i^\ast),\alpha_i)$. Let 
\[
S_i^X=S(\frac{x_i^\ast}{\|x_i^\ast\|},\alpha_i)\quad\text{if $x_i^\ast\not=0$,}\qquad\quad\text{and}\quad\qquad S_i^X=B_X\quad\text{if $x_i^\ast=0$},
\]
and 
\[
S_i^Y=S(\frac{y_i^\ast}{\|y_i^\ast\|},\alpha_i)\quad\text{if $y_i^\ast\not=0$,}\qquad\quad\text{and}\quad\qquad S_i^Y=B_Y\quad\text{if $y_i^\ast=0$}.
\]
We have $S_i^X\times S_i^Y\subset S_i$, because for $u\in S_i^X$ and $v\in S_i^Y$,
\begin{align*}
x_i^\ast(u)+y_i^\ast(v)>\|x_i^\ast\|(1-\alpha_i)+\|y_i^\ast\|(1-\alpha_i)=1-\alpha_i.
\end{align*}
Since $\sum_{i=1}^n\lambda_iS_i^X$ and $\sum_{i=1}^n\lambda_i S_i^Y$ have nonempty relatively weak interior, there are $x\in B_X$, $y\in B_Y$, and relatively weakly open subsets $U\subset B_X$ and $V\subset B_Y$ such that 
\[x\in U\subset \sum_{i=1}^n\lambda_i S_i^X,\quad y\in V\subset \sum_{i=1}^n\lambda_i S_i^Y.\]
Clearly $U\times V\subset B_Z$ is relatively weakly open and 
\[(x,y)\in U\times V\subset \sum_{i=1}^n\lambda_i S_i^X\times \sum_{i=1}^n\lambda_i S_i^Y=\sum_{i=1}^n\lambda_i (S_i^X\times S_i^Y)\subset \sum_{i=1}^n\lambda_i S_i.\]

Assume now that $Z$ has property (P2). We only show that $X$ has property (P2), similarly one obtains that $Y$ has property (P2). Let $C$ be a convex combination of slices of $B_X$. Then $D:=C\times B_Y$ is a convex combination of slices of $B_Z$; therefore $D$ has nonempty interior in the relative weak topology of $B_Z$. Thus $C$ has nonempty interior in the relative weak topology of $B_X$.

\end{proof}

By Proposition~\ref{P1infty}, in order for the $\infty$-sum of two Banach spaces to have property (P1), it is necessary that both these spaces have (P1). By Proposition~\ref{P2infty}, the same is true for property (P2). For property (P3) in the same situation, it suffices that only one of these spaces  has (P3).

\begin{prop}\label{P3infty} Let $X$ and $Y$ be Banach spaces. If $X$ has property (P3), then $X\oplus_\infty Y$ has property (P3).
\end{prop}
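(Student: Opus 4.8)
The plan is to reduce a convex combination of slices of $B_Z$, where $Z = X\oplus_\infty Y$, to a convex combination of slices of $B_X$, exploiting the fact that the $\infty$-norm dual is the $1$-norm, so that $\|z_i^\ast\| = \|x_i^\ast\| + \|y_i^\ast\|$ for $z_i^\ast = (x_i^\ast, y_i^\ast) \in S_{Z^\ast}$. First I would take a convex combination $\sum_{i=1}^n \lambda_i S_i$ of slices of $B_Z$, with $S_i = S((x_i^\ast, y_i^\ast), \alpha_i)$ and $\|x_i^\ast\| + \|y_i^\ast\| = 1$. The idea is that a slice of $B_Z$ in the direction $(x_i^\ast, y_i^\ast)$ contains a product of a slice of $B_X$ in direction $x_i^\ast/\|x_i^\ast\|$ with all of $B_Y$; the loss incurred in passing to this product is controlled because $\|x_i^\ast\|$ need not be small only if $\|y_i^\ast\|$ is small, and vice versa.

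The main step is a case split according to whether the ``$X$-mass'' $\sum_i \lambda_i \|x_i^\ast\|$ dominates. If there is at least one index $i$ with $x_i^\ast \neq 0$, set $S_i^X = S(x_i^\ast/\|x_i^\ast\|, \alpha_i)$ when $x_i^\ast \neq 0$ and $S_i^X = B_X$ otherwise; then for $u \in S_i^X$ and any $v \in B_Y$ one checks $(x_i^\ast, y_i^\ast)(u,v) = x_i^\ast(u) + y_i^\ast(v) > \|x_i^\ast\|(1-\alpha_i) - \|y_i^\ast\| \geq 1 - \alpha_i - 2\|y_i^\ast\|$, which is not quite enough unless $\|y_i^\ast\|$ is small. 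To fix this, I would instead shrink the slice parameter: use $S_i^X = S(x_i^\ast/\|x_i^\ast\|, \alpha_i \|x_i^\ast\|/2)$ say, or more cleanly pick, for each $i$, whichever of $x_i^\ast, y_i^\ast$ has norm $\geq 1/2$ and take the corresponding slice in that coordinate paired with the whole ball in the other; then $x_i^\ast(u) + y_i^\ast(v) \geq \|x_i^\ast\|(1 - \alpha_i') - \|y_i^\ast\|$ with $\|x_i^\ast\| \geq 1/2$, and choosing $\alpha_i'$ so that $\|x_i^\ast\| - \|x_i^\ast\|\alpha_i' - \|y_i^\ast\| > 1 - \alpha_i$, i.e. $\|x_i^\ast\|\alpha_i' < \|x_i^\ast\| - \|y_i^\ast\| - 1 + \alpha_i = 2\|x_i^\ast\| - 2 + \alpha_i$; since $\|x_i^\ast\|$ may equal $1/2$ this forces $\alpha_i' < \alpha_i - 1 + 2\|x_i^\ast\|$ which can be negative. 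So the honest route is: relabel so that $x_i^\ast \neq 0$ for the indices we use, write each $S_i$ as containing $(\|x_i^\ast\| S_i^{X} ) \times (\|y_i^\ast\| B_Y)$-type sets is the wrong normalization — the vectors live in $B_X \times B_Y$, not scaled. The clean statement, mirroring the computation in Proposition~\ref{P2infty}, is that with $S_i^X = S(x_i^\ast/\|x_i^\ast\|, \alpha_i)$ one has $S_i^X \times S_i^Y \subset S_i$ where $S_i^Y$ is defined symmetrically; but to land on the sphere of $Z$ I only need a norm-one element of $\sum \lambda_i S_i^X$ in the $X$-coordinate and then pad with $0 \in B_Y$.

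Concretely: if $\sum_{i=1}^n \lambda_i \|x_i^\ast\| \geq \sum_{i=1}^n \lambda_i \|y_i^\ast\|$ — which after discarding indices with $\lambda_i = 0$ and relabeling we may assume gives at least one $i$ with $x_i^\ast \neq 0$ — I form $S_i^X = S(x_i^\ast/\|x_i^\ast\|, \alpha_i)$ for $x_i^\ast \neq 0$ and $S_i^X = B_X$ for $x_i^\ast = 0$, observe $S_i^X \times B_Y \subset S_i$ is generally \emph{false}, so instead I keep both coordinates honest: pick $v_i \in B_Y$ with $y_i^\ast(v_i) > 1 - \alpha_i/2$ (possible even if $y_i^\ast = 0$, taking $v_i = 0$) when $\|y_i^\ast\|$ is large, and $u_i \in S_X$ with $x_i^\ast(u_i)$ near $\|x_i^\ast\|$; since $X$ has (P3), $\sum \lambda_i S_i^X$ meets $S_X$, giving $x = \sum \lambda_i u_i \in S_X$ with $u_i \in S_i^X$, and then $(u_i, v_i) \in S_i$ provided $\|x_i^\ast\| x_i^\ast(u_i)/\|x_i^\ast\| + y_i^\ast(v_i)$ works out — choosing the $v_i$ to compensate. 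The cleanest packaging: I expect the main obstacle to be exactly this bookkeeping of how the slice parameter must be adjusted coordinate-wise so that a full slice of $B_X$ (not a scaled copy) times a well-chosen point of $B_Y$ lands inside $S_i$; once that is set up, applying property (P3) of $X$ to $\sum \lambda_i S_i^X$ and pairing the resulting unit vector with the appropriate convex combination $\sum \lambda_i v_i \in B_Y$ gives a point of $S_Z \cap \sum \lambda_i S_i$, and the symmetric case where the $Y$-mass dominates (using that $Y$ need \emph{not} have (P3), so one must instead always route through $X$ — here one uses that if all $x_i^\ast = 0$ then each slice $S_i$ already contains $S_X \times \{v_i\}$ outright) is handled analogously.
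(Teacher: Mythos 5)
Your high-level strategy --- pair a convex combination of slices of $B_X$ with suitably chosen fixed points of $B_Y$, apply (P3) in $X$, and use that the $\infty$-norm of the resulting pair is the maximum of the coordinate norms --- is the right one and is exactly the paper's. But the proposal never actually produces the slices $S_i^X$ and points $v_i\in B_Y$ with $S_i^X\times\{v_i\}\subset S_i$: every concrete candidate you write down is either discarded by you as failing (the $1-\alpha_i-2\|y_i^\ast\|$ estimate, the $\alpha_i'<\alpha_i-1+2\|x_i^\ast\|$ computation) or is impossible as stated (a $v_i\in B_Y$ with $y_i^\ast(v_i)>1-\alpha_i/2$ need not exist, since $\sup_{v\in B_Y}y_i^\ast(v)=\|y_i^\ast\|$ can be much smaller than $1-\alpha_i/2$), and you explicitly defer ``the bookkeeping'' as ``the main obstacle''. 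That bookkeeping is the whole content of the proof, so as it stands there is a genuine gap.

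The missing step is short. Pick any point $\sum_{i=1}^n\lambda_i(x_i,y_i)$ of the given convex combination, with $(x_i,y_i)\in S_i$, and define
\[
S_i^X:=\bigl\{u\in B_X\colon x_i^\ast(u)>1-\alpha_i-y_i^\ast(y_i)\bigr\}.
\]
Since $x_i^\ast(x_i)+y_i^\ast(y_i)>1-\alpha_i$, this set contains $x_i$, so it is a nonempty slice of $B_X$ (the threshold lies strictly below $\|x_i^\ast\|$; when $x_i^\ast=0$ it is all of $B_X$). By construction $(u,y_i)\in S_i$ for \emph{every} $u\in S_i^X$ --- the coordinate $y_i$ of the chosen point itself plays the role of your $v_i$, so no near-norming choices, no adjustment of the slice parameter ``coordinate-wise'', and no case split on which coordinate dominates or on whether some $x_i^\ast$ vanish is needed. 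Property (P3) of $X$ applied to $\sum_i\lambda_i S_i^X$ gives $u_i\in S_i^X$ with $\bigl\|\sum_i\lambda_i u_i\bigr\|=1$, and then $\sum_i\lambda_i(u_i,y_i)\in\sum_i\lambda_i S_i$ has $\infty$-norm $\max\bigl\{1,\bigl\|\sum_i\lambda_i y_i\bigr\|\bigr\}=1$.
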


\begin{proof}
Set $Z=X\oplus_\infty Y$. Consider a convex combination of slices of $B_Z$, say $\sum_{i=1}^n\lambda_i S_i$, where $S_i=S((x_i^\ast,y_i^\ast),\alpha_i)$. Let $(x,y)=\sum_{i=1}^n\lambda_i(x_i,y_i)\in \sum_{i=1}^n\lambda_i S_i$, where $(x_i,y_i)\in S_i$. Define \[S_i^X=\{u\in B_X\colon x_i^\ast(u)>1-\alpha_i-y_i^\ast(y_i)\}.\] Then $S_i^X$ is a slice of $B_X$ and $x_i\in S_i^X$. Since $X$ has property (P3), there exists         
$u=\sum_{i=1}^n\lambda_i u_i\in S_X$, where $u_i\in S_i^X$. It remains to obsreve that $(u_i,y_i)\in S_i$ and $\|\sum_{i=1}^n\lambda_i(u_i,y_i)\|_\infty=\|\sum_{i=1}^n\lambda_i u_i\|=1$.
\end{proof}

Since there exists Banach spaces without property (P3), Propositions~\ref{P2infty} and \ref{P3infty} immediately imply our second main result in this section.

\begin{cor}
Properties (P2) and (P3) are different.
\end{cor}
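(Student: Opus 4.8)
The plan is to combine the two immediately preceding propositions. By Proposition~\ref{P2infty}, a Banach space $Z=X\oplus_\infty Y$ has property (P2) if and only if \emph{both} $X$ and $Y$ have property (P2); in particular, if $X$ fails (P2) then so does $X\oplus_\infty Y$, regardless of $Y$. By Proposition~\ref{P3infty}, on the other hand, property (P3) of $X\oplus_\infty Y$ follows from property (P3) of $X$ alone, with no requirement on $Y$. So the strategy is: pick a Banach space $X$ that has property (P3) but \emph{not} property (P2), and then $X\oplus_\infty Y$ for any $Y$ already witnesses the gap --- actually even $X$ itself witnesses it.

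First I would recall (or supply) the existence of a Banach space failing (P3): the phrase ``there exist Banach spaces without property (P3)'' in the text before the corollary must be justified. The standard example is a space with the Radon--Nikod\'ym property, or more simply a space with an extreme point of norm one less than the diameter-related obstruction --- concretely, any strictly convex space, or $\ell_2$, has slices of arbitrarily small diameter, so a suitable convex combination of slices can be forced to lie strictly inside $B_X$; even more directly, any space in which some convex combination of slices fails to meet the sphere works, and $\ell_p$ for $1<p<\infty$ (or any uniformly convex space) does the job because a single slice around a strongly exposed point shrinks to that point. Call such a space $E$, so $E$ lacks (P3), hence lacks (P2).

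Next, produce a space with (P3) but without (P2). Here I would invoke the $\infty$-sum machinery: take $E$ as above and any Banach space $F$ with property (P3) --- for instance $F=C[0,1]$, which the introduction records as having (P3). Set $Z=F\oplus_\infty E$. By Proposition~\ref{P3infty}, since $F$ has (P3), the space $Z$ has (P3). By Proposition~\ref{P2infty}, since $E$ does \emph{not} have (P2), the space $Z$ does \emph{not} have (P2). Thus $Z$ has (P3) but not (P2), so the two properties are not equivalent.

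The only real obstacle is the first step --- being sure that a space without (P3) actually exists and is cited or constructed cleanly; the rest is a two-line deduction from Propositions~\ref{P2infty} and \ref{P3infty}. (In fact the paper's separate treatment of $C[0,1]$, announced in the introduction as having (P3) but not (P2), gives an alternative self-contained witness, so one could instead simply cite that forthcoming result; but the $\infty$-sum argument above is the cheapest route given what is already proved in this section.)

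\begin{proof}
Fix a Banach space $E$ without property (P3) (such spaces exist; e.g., any uniformly convex space, where a slice around a strongly exposed point can be made to have arbitrarily small diameter, so an appropriate convex combination of slices stays strictly inside $B_E$). By Proposition~\ref{P2infty} applied with the roles of the summands as needed, the $\infty$-sum $C[0,1]\oplus_\infty E$ does not have property (P2), since $E$ does not have property (P2). On the other hand, $C[0,1]$ has property (P3), so by Proposition~\ref{P3infty} the space $C[0,1]\oplus_\infty E$ has property (P3). Hence property (P3) does not imply property (P2).
\end{proof}
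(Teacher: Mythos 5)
Your proof is correct and follows essentially the same route as the paper, which derives the corollary from Propositions~\ref{P2infty} and \ref{P3infty} together with the existence of a space failing (P3) (your uniformly convex example, e.g.\ $\ell_2$ with the slices $S(x^\ast,\alpha)$ and $S(-x^\ast,\alpha)$, is a valid witness). The only difference is that you spell out the witness space and the choice of the (P3)-summand $C[0,1]$ explicitly, which the paper leaves implicit.
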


\section{$C_0(K)$ with $K$ non-scattered fails property (P2)}

\begin{thm}\label{thm: C_0(K) with K non-scattered}
Let a locally compact Hausdorff topological space $K$ admit an atomless regular finite Borel measure $\mu$.
Then $C_0(K)$ admits a convex combination of slices (of the closed unit ball) with empty interior in the relative weak topology (of the closed unit ball).
\end{thm}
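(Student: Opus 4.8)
The plan is to exhibit an explicit convex combination of slices of $B_{C_0(K)}$ — most naturally a combination of just two slices, $\frac12 S(f_1,\alpha)+\frac12 S(f_2,\alpha)$ — and to show no point of it is interior in the relative weak topology. The measure $\mu$ supplies a norm-one functional on $C_0(K)$ via integration (after normalizing so that $\mu(K)=1$, say), and atomlessness is the crucial input: it lets us split $K$ into finitely many Borel pieces of small measure, or, via Lyapunov/Urysohn-type arguments, produce continuous functions that are $\pm 1$ on large chunks of $K$. I would take $f_1(g)=\int_K g\,d\mu$ and $f_2(g)=-\int_K g\,d\mu=-f_1(g)$ (or perturb slightly so they are genuinely different slices), so that the convex combination $C=\frac12 S(f_1,\alpha)+\frac12 S(f_2,\alpha)$ is a symmetric neighbourhood of $0$; the key computation will be to identify which functions lie in $C$.

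The heart of the argument is a characterization: $h\in C$ if and only if $h=\frac12(g_1+g_2)$ with $\|g_j\|\le 1$ and $\pm\int g_j\,d\mu>1-\alpha$. Because $\mu$ is atomless, for any $g_1\in B_{C_0(K)}$ with $\int g_1\,d\mu>1-\alpha$, the function $g_1$ must be close to $1$ on a set of measure at least $1-\frac{\alpha}{2}$ (roughly), and symmetrically $g_2$ is close to $-1$ on a large set; hence $h=\frac12(g_1+g_2)$ is, on a large-measure set, forced to be near $0$ only in an averaged sense but can oscillate. I would then show that any putative relatively weakly open neighbourhood $W$ of a point $h_0\in C$ inside $B_{C_0(K)}$ — determined by finitely many measures $\nu_1,\dots,\nu_m$ and an $\varepsilon>0$ — contains a function $h$ that violates the membership condition for $C$. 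The mechanism: using atomlessness of $\mu$ (and regularity to pass between Borel sets and continuous functions), pick a small-$\mu$-measure open set $O$ avoiding the "atoms" of the finitely many $\nu_i$ in an appropriate sense, and modify $h_0$ on $O$ by adding a bump that pushes the value outside $[-1,1]$ at some point, or that destroys the required structure $h=\frac12(g_1+g_2)$ with the integral constraints. The finitely many functionals $\nu_i$ cannot detect this localized change (their mass on $O$ is small), so the modified function stays in $W$ but leaves $C$.

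Concretely, I expect the cleanest route is: (i) normalize $\mu(K)=1$ and set $\alpha$ small; (ii) observe $g\in S(f_1,\alpha)\cap B$ forces $\mu(\{g<1-2\sqrt\alpha\})<\sqrt\alpha$ or a similar quantitative statement; (iii) deduce that every $h\in C$ satisfies $\mu(\{|h|>2\sqrt\alpha\})$ bounded by something like $4\sqrt\alpha$ — i.e., $h$ is small on most of $K$; (iv) given any weak neighbourhood $W$ of any $h_0\in C$, use the $\nu_i$'s continuity with respect to $\mu$ after subtracting their atomic parts, together with atomlessness of $\mu$, to find a region of positive $\mu$-measure where we can freely add a bump of height $\approx 1$ to $h_0$ without moving the $\nu_i$-values by more than $\varepsilon$; the resulting $h\in W$ has $\mu(\{|h|>2\sqrt\alpha\})$ too large, so $h\notin C$. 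The main obstacle is step (iv): the finitely many measures $\nu_i$ may themselves have atoms or singular parts, and one must carefully separate these from $\mu$ (Lebesgue decomposition plus regularity of all measures involved) to guarantee a genuine "free" region for $\mu$ on which the bump is invisible to the $\nu_i$; making this simultaneous control precise, while keeping the modified function continuous and in $B_{C_0(K)}$, is where the real work lies.
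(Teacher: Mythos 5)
Your overall architecture matches the paper's: two slices built from $\pm$-signed versions of integration against $\mu$, the observation (via a Markov/Chebyshev estimate, the paper uses $\alpha=\eps^2$ and threshold $\eps$) that every element of the convex combination must be small in modulus off a set of small $\mu$-measure, and then a perturbation inside an arbitrary basic weak neighbourhood that destroys this property. Steps (i)--(iii) are essentially correct and are what the paper does.

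The gap is in step (iv), and it is not merely a matter of "making the control precise" --- the mechanism you propose there would fail. You cannot in general find a region of $\mu$-measure bounded below by a fixed constant (which is what you need, since the perturbed function must violate $\mu(\{|h|>\text{small}\})\lesssim\sqrt{\alpha}$) on which all the functionals $\nu_1,\dotsc,\nu_m$ defining the weak neighbourhood are small: one of the $\nu_i$ may simply be $\mu$ itself, and then a one-signed bump of height $\approx 1$ supported on a set of $\mu$-measure $\geq c$ moves $\int h\,d\nu_i$ by roughly $c$, which exceeds $\varepsilon$. Lebesgue decomposition and removal of atoms do not help with this. The correct mechanism is cancellation, not invisibility: after reducing to the case where each $\nu_i$ is atomless or Dirac, one splits the region where the perturbation will live into two Borel halves $D^1,D^2$ with $|\nu_i(D^1)-\nu_i(D^2)|$ small \emph{simultaneously for all $i$} (this requires a separate lemma --- the paper proves by induction that finitely many atomless measures admit such an approximate bisection, after a Hahn decomposition so that each $\nu_i$ has constant sign on each piece), and then perturbs by $+(1-\eps)$ on (a large compact subset of) $D^1$ and by $-(1-\eps)$ on $D^2$, realized as a continuous function via regularity and Tietze's theorem. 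The signed contributions to each $\int(u-z)\,d\nu_i$ then nearly cancel, while $|u|\geq 1-2\eps$ on all of $D^1\cup D^2$. Without this bisection lemma and the signed bump, your step (iv) does not go through. (A minor additional point: your alternative of "pushing the value outside $[-1,1]$" is not available, since the perturbed function must remain in $B_{C_0(K)}$; the paper's choice $z\pm(1-\eps)$ with $|z|<\eps$ on the perturbation set keeps $\|u\|\leq 1$.)
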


\begin{rem}\label{rem: scatterednes}
A topological space $K$ is said to be \emph{scattered} (or \emph{disperse}) if every non-empty subset of $K$ has an isolated point (with respect to this subset).
(Equivalently, $K$ is scattered if it does not contain non-empty perfect subsets.) 
In \cite{Pel&Sem}, it was proven that, for a compact Hausdorff topological space $K$, the following assertions are equivalent:
\begin{itemize}
\item[{\rm(i)}]
\emph{$K$ is scattered;}
\item[{\rm(ii)}]
\emph{there exist no non-zero atomless regular Borel measures on $K$.}
\end{itemize}
In fact, the implication (i)$\Rightarrow$(ii) is due to Rudin \cite{Rudin}.
A more direct proof of the equivalence (i)$\Leftrightarrow$(ii) can be found in \cite{Knowles}
with the proof of (i)$\Rightarrow$(ii) being stunningly simple.

The equivalence (i)$\Leftrightarrow$(ii) easily generalizes to locally compact Hausdorff spaces
(see \cite{Luther} for details).
\end{rem}

\begin{rem}
In \cite{AL}, it has been proven that, \emph{for an infinite compact scattered Hausdorff topological space $K$, the Banach space $C(K)$ has property (P1).}
By Remark \ref{rem: scatterednes}, Theorem \ref{thm: C_0(K) with K non-scattered} complements this result:
\emph{$C(K)$ has property (P1) if and only if $K$ is scattered,} and \emph{$C(K)$ fails property (P2) if and only if $K$ is not scattered.}
\end{rem}

\begin{rem}\label{rem: conv. comb. of slices in C_0(K) reach the sphere}
Since every convex combination of slices of the unit ball of $C_0(K)$ (with $K$ infinite) can easily be seen to intersect the unit sphere,
the space $C_0(K)$ has property (P1). Thus, for a non-scattered $K$, the space $C_0(K)$ serves as an example of a Banach space with property (P3) but without (P2).
\end{rem}

The proof of Theorem \ref{thm: C_0(K) with K non-scattered} uses the following measure-theoretical lemma.

\begin{lem}\label{lem: n continuous measures}
Let $\Sigma$ be a $\sigma$-algebra of subsets of a non-empty set $\Omega$, let $n\in\N$, and let $\mu_1,\dotsc,\mu_n$ be atomless finite (non-negative) measures on $\Sigma$.
Then, whenever $E\in\Sigma$ and $\eps>0$, there are disjoint $A,B\in\Sigma$ such that $A\cup B=E$ and $|\mu_i(A)-\mu_i(B)|\leq\eps$ for every $i\in\{1,\dotsc,n\}$.
\end{lem}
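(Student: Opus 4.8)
The plan is to prove the lemma by induction on $n$, reducing at each stage to Lyapunov's convexity theorem applied to a single atomless vector measure.

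\medskip

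\textbf{Base case and the key tool.} For $n=1$ the statement is the classical fact that the range of an atomless finite measure is an interval: given $E$ and $\eps$, by Lyapunov's theorem the set $\{\mu_1(F)\colon F\in\Sigma,\ F\subset E\}$ equals the whole interval $[0,\mu_1(E)]$, so we may pick $A\subset E$ with $\mu_1(A)=\tfrac12\mu_1(E)$ and set $B=E\setminus A$; then $\mu_1(A)=\mu_1(B)$. (Even the weaker statement with $\leq\eps$ would follow by a halving argument, but exact bisection is available.) More generally, I will use the full Lyapunov theorem: for atomless finite measures $\mu_1,\dotsc,\mu_n$ on $\Sigma$, the vector measure $F\mapsto(\mu_1(F),\dotsc,\mu_n(F))$ has convex (indeed compact convex) range. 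This immediately gives, for any $E\in\Sigma$, a set $A\subset E$ with $\mu_i(A)=\tfrac12\mu_i(E)$ for \emph{every} $i$ simultaneously, and then $B=E\setminus A$ works with the differences equal to $0$, not merely $\leq\eps$. So in fact the lemma holds with equality in place of the $\eps$-inequality, and no induction is needed if one is willing to invoke Lyapunov directly.

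\medskip

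\textbf{If one prefers to avoid Lyapunov.} Should the authors want a self-contained argument using only the one-dimensional fact (range of an atomless scalar measure is an interval), I would proceed by induction on $n$. Suppose the claim holds for $n-1$ measures. Given $E$ and $\eps$, first apply the inductive hypothesis to $\mu_1,\dotsc,\mu_{n-1}$ and $E$ to get a partition $E=A_0\cup B_0$ with $|\mu_i(A_0)-\mu_i(B_0)|\leq\eps$ for $i<n$. The trouble is that $\mu_n(A_0)$ and $\mu_n(B_0)$ may be far apart, and fixing that could destroy the balance already achieved for $i<n$. The standard remedy is a finer subdivision: partition $E$ into many small pieces, each with small $\mu_i$-measure for all $i\le n$, and then distribute the pieces greedily between $A$ and $B$. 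Concretely, choose $m$ large and, repeatedly halving via the one-dimensional result applied to $\sum_{i=1}^n\mu_i$, split $E$ into $2^m$ sets $E_1,\dotsc,E_{2^m}$ with $\sum_{i=1}^n\mu_i(E_j)\le 2^{-m}\sum_{i=1}^n\mu_i(E)<\eps$ for each $j$ once $m$ is large enough; in particular $\mu_i(E_j)<\eps$ for all $i,j$. Then build $A$ by adding the $E_j$ one at a time, each time appending the next piece to whichever of the two running unions currently has the smaller value of, say, the $i$th coordinate that is most out of balance — more cleanly, process the pieces so that after each step the partial sums $(\mu_i(\text{current }A))_i$ and $(\mu_i(\text{current }B))_i$ differ coordinatewise by at most $\max_j\mu_i(E_j)<\eps$. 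A clean way to see this works: order does not matter if we instead note that any assignment of the small pieces yields $|\mu_i(A)-\mu_i(B)|\le$ (a sum we can control), but the simplest rigorous route is again to observe that the $2^m$ points $\{0,\mu(E_j)\}$ being small forces the set of achievable vectors $\sum_{j\in J}\mu(E_j)$ to be $\eps$-dense in $[0,\mu_1(E)]\times\cdots\times[0,\mu_n(E)]$, hence some $J$ gives $\sum_{j\in J}\mu_i(E_j)$ within $\eps/2$ of $\tfrac12\mu_i(E)$ for all $i$, and $A=\bigcup_{j\in J}E_j$, $B=E\setminus A$ finishes it.

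\medskip

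\textbf{Main obstacle.} The only real subtlety is the simultaneous control of all $n$ coordinates: balancing one measure is trivial, but balancing several at once is exactly the content of Lyapunov's convexity theorem (or, in the elementary approach, the combinatorial fact that sufficiently fine atomless partitions have $\eps$-dense subset-sums in the product of the coordinate ranges). I expect to simply cite Lyapunov and be done, noting as a remark that the conclusion even holds with $\mu_i(A)=\mu_i(B)$.
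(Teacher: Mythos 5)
Your main argument is correct but takes a genuinely different route from the paper. You invoke Lyapunov's convexity theorem for the vector measure $F\mapsto(\mu_1(F),\dotsc,\mu_n(F))$ (which is non-atomic because each coordinate is atomless, so any putative atom $E$ would have $\mu_i(E)>0$ for some $i$ and could be split by the atomlessness of that $\mu_i$), restricted to subsets of $E$; its range is convex and contains $0$ and the vector $(\mu_i(E))_i$, hence contains the midpoint, giving $A\subset E$ with $\mu_i(A)=\mu_i(E\setminus A)$ for all $i$ simultaneously. The paper instead argues by induction on $n$ using only the scalar exact-bisection fact: it cuts $E$ into pieces of small $\mu_{m+1}$-measure, applies the inductive hypothesis on each piece with tolerance $\eps/2^j$, and then assigns the two halves of each piece to $A$ or $B$ greedily so as to also balance $\mu_{m+1}$. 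Your route is shorter, yields the stronger conclusion with $\eps=0$, and in fact settles the question the authors raise in the remark following the lemma (they say they do not know whether the $\eps=0$ version holds for $n\geq2$); the paper's route buys self-containedness, using nothing beyond the classical one-measure case. One caveat on your fallback ``elementary'' argument: the claim that the subset sums $\sum_{j\in J}\mu(E_j)$ are $\eps$-dense in the full box $[0,\mu_1(E)]\times\cdots\times[0,\mu_n(E)]$ is false in general (take $\mu_1=\mu_2$: all subset sums lie on the diagonal). What is true, and what you actually need, is that they are close to every point of the zonotope $\bigl\{\sum_j t_j\,\mu(E_j)\colon t_j\in[0,1]\bigr\}$, which contains the target $\tfrac12(\mu_i(E))_i$; that requires a Shapley--Folkman-type argument or a sequential balancing scheme like the paper's, not mere smallness of the pieces. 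Since you primarily cite Lyapunov, this does not affect the correctness of your proof.
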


\begin{rem}
For $n=1$, Lemma \ref{lem: n continuous measures} is well known to hold even with $\eps=0$ (see, e.g., \cite[Theorem~10.52]{Aliprantis&}). We do not know, whether it remains true with $\eps=0$ for $n\geq2$.
\end{rem}

\begin{proof}[Proof of Lemma \ref{lem: n continuous measures}]
We use induction on $n$. First, for $n=1$, the lemma is well known to hold even with $\eps=0$  (see, e.g., \cite[Theorem~10.52]{Aliprantis&}).
Now assume that the lemma holds for $n=m$ for some $m\in\N$. To see that it also holds for $n=m+1$,
let  $\mu_1,\dotsc,\mu_m,\mu$ be atomless finite (non-negative) measures on $\Sigma$, let $E\in\Sigma$, and let $\eps>0$.
Since $\mu$ is atomless, we can, for some $k\in\N$, represent $E$ as a union $E=\bigcup\limits_{j=1}^k E_j$ of pairwise disjoint sets $E_j,\dotsc,E_k\in\Sigma$ satisfying
$\mu(E_1),\dotsc,\mu(E_k)<\eps$. By our assumption, we can represent each $E_j$ as a union $E_j=C_j\cup D_j$ of disjoint sets $C_j,D_j\in\Sigma$ such that
$|\mu_i(C_j)-\mu_i(D_j)|\leq\dfrac{\eps}{2^j}$ for every $i\in\{1,\dotsc,m\}$. We may assume that $\mu(C_j)\geq\mu(D_j)$ for every $j\in\{1,\dotsc,k\}$.
Define $A_1=C_1$ and $B_1=D_1$, and proceed as follows: provided $l\in\{1,\dotsc,k-1\}$ and $A_1,\dotsc, A_l,B_1,\dotsc, B_l$, define
\begin{align*}
A_{l+1}&:=C_{l+1}&&\quad\text{and}\quad &B_{l+1}&:=D_{l+1}&&\qquad\text{if}\quad &\sum\limits_{j=1}^l\mu(A_j)&\leq\sum\limits_{j=1}^l\mu(B_j);\\
A_{l+1}&:=D_{l+1}&&\quad\text{and}\quad &B_{l+1}&:=C_{l+1}&&\qquad\text{if}\quad &\sum\limits_{j=1}^l\mu(A_j)&>\sum\limits_{j=1}^l\mu(B_j).
\end{align*}
Now set $A:=\bigcup\limits_{j=1}^k A_j$ and $B:=\bigcup\limits_{j=1}^k B_j$. Then $E=A\cup B$ and, for every $i\in\{1,\dotsc,m\}$,
\begin{align*}
|\mu_i(A)-\mu_i(B)|
\leq\sum_{j=1}^k|\mu_i(A_j)-\mu_i(B_j)|\leq\sum_{j=1}^k\dfrac{\eps}{2^j}<\eps.
\end{align*}
Also, clearly $|\mu(A)-\mu(B)|<\eps$, and the proof is complete.
\end{proof}

\begin{proof}[Proof of Theorem \ref{thm: C_0(K) with K non-scattered}]
Throughout the proof, functionals in $C_0(K)^\ast$ will be identified with their representing (regular finite signed) Borel measures.
The Borel $\sigma$-algebra of $K$ will be denoted by $\mathcal{B}_K$.

Without loss of generality, one may assume that $\mu(K)=1$.
Since $\mu$ is atomless, there are pairwise disjoint $A, B, C\in \mathcal{B}_K$ whose union is $K$ and $\mu(A)=\mu(B)=\mu(C)=\dfrac{1}{3}$.
Define (signed) Borel measures $\mu_1$ and $\mu_2$ on~$K$ by
\[
\mu_i(E)=\mu(E\cap A)+(-1)^{i-1}\mu(E\cap B)-\mu(E\cap C), \quad E\in \mathcal{B}_K,\qquad i=1,2,
\]
and let $0<\eps<\dfrac1{18}$. Consider the slices $S_1:=S(\mu_1,\eps^2)$ and $S_2:=S(\mu_2,\eps^2)$ of the closed unit ball of $C_0(K)$.
For any $u\in B_{C_0(K)}$, set
\begin{align*}
B^u_1:=B\cap\{u\leq1-\eps\},
\quad
B^u_{-1}:=B\cap\{u\geq-1+\eps\},
\quad
B^u_0:=B\cap\{|u|\geq\eps\}.
\end{align*}
Observe that, whenever $x\in S_1$, one has $\mu(B^x_{1})<\eps$, because otherwise one would have
\begin{align*}
\mu_1(x)
&=\int_A x\,d\mu+\int_B x\,d\mu-\int_C x\,d\mu\\
&\leq\mu(A)+\mu(C)+\mu(B\setminus B^x_{1})+(1-\eps)\mu(B^x_{1})\\
&=1-\mu(B^x_{1})+(1-\eps)\mu(B^x_{1})=1-\eps\mu(B^x_{1})\\
&\leq1-\eps^2,
\end{align*}
a contradiction. Similarly, whenever $y\in S_2$, one has $\mu(B^y_{-1})<\eps$, because otherwise one would have
\begin{align*}
\mu_2(y)
&=\int_A y\,d\mu-\int_B y\,d\mu-\int_C y\,d\mu\\
&\leq\mu(A)+\mu(C)+\mu(B\setminus B^y_{-1})+(1-\eps)\mu(B^y_{-1})\\
&=1-\mu(B^y_{-1})+(1-\eps)\mu(B^y_{-1})=1-\eps\mu(B^y_{-1})\\
&\leq1-\eps^2.
\end{align*}
It follows that $\mu(B^u_0)<2\eps$ for every $u\in \dfrac12S_1+\dfrac12S_2$.
Indeed, let $u=\dfrac12x+\dfrac12y$ where $x\in S_1$ and $y\in S_2$.
Then $B\setminus B^u_0\supset (B\setminus B^x_1)\cap(B\setminus B^y_{-1})=:\widehat{B}$, because, whenever $t\in\widehat{B}$, one has
$x(t)=1-\eps_1$ and $y(t)=-1+\eps_2$ for some $\eps_1,\eps_2\in[0,\eps)$, and thus
\begin{align*}
u(t)=\frac12(1-\eps_1)+\frac12(-1+\eps_2)=\frac{\eps_2-\eps_1}2.
\end{align*}
Hence $B^u_0\subset B\setminus \widehat{B}=B^x_1\cup B^y_{-1}$ and $\mu(B^u_0)\leq\mu(B^x_1)+\mu(B^y_{-1})<2\eps$.

Now let $z\in \dfrac12S_1+\dfrac12S_2$ be arbitrary, let $\mathcal{C}$ be a finite subset of the dual unit sphere $S_{C_0(K)^\ast}$, and let $\gamma>0$.
It suffices to show that the set
\[
U:=\{u\in B_{C_0(K)}\colon\,\text{$|\nu(u-z)|<\gamma$ for every $\nu\in\mathcal{C}$}\}
\]
contains an element which is not in $\dfrac12S_1+\dfrac12S_2$.
Without loss of generality, one may assume that, for every functional in $\mathcal{C}$, its representing measure is either atomless or a Dirac measure.
So one may assume that, for some $n\in\N$ and some finite subset $F\subset K$,
one has $\mathcal{C}=\bigl\{\nu_j,\,\delta_t\colon\,j\in\{1,\dotsc,n\},\,t\in F\bigr\}$, where each $\nu_j$ is atomless
and $\delta_{t}$ is the Dirac measure at $t\in F$.

Define $E:=A\cup C\cup B^z_0$, $D:=B\setminus B^z_0=B\setminus E=\Omega\setminus E$, $\nu:=\mu+|\nu_1|+\dotsb+|\nu_n|$,
and let $0<\delta<\min\Bigl\{\eps,\dfrac{\gamma}{9}\Bigr\}$.
Using Hahn's decomposition, one can represent $D$ as a finite union $D=\bigcup\limits_{k=1}^{m}D_k$
where $D_1,\dotsc,D_m$ are disjoint Borel sets such that, for every $D_k$, each  $\nu_j$ preserves its sign on Borel subsets of $D_k$.
By Lemma \ref{lem: n continuous measures}, each $D_k$ can be decomposed as a disjoint union $D_k=D^1_k\cup D^2_k$ so that, for every $j\in\{1,\dots,n\}$,
one has $|\nu_j(D^1_k)-\nu_j(D^2_k)|<\dfrac{\delta}{m}$, and thus, setting $D^i:=\bigcup\limits_{k=1}^m D^i_k$, $i=1,2$,
\[
|\nu_j(D^1)-\nu_j(D^2)|\leq\sum_{k=1}^m|\nu_j(D^1_k)-\nu_j(D^2_k)|<\delta.
\]
By the regularity of $\nu$, there are compact subsets $K_E\subset E$ and $K^i_k\subset D^i_k$
such that $\nu(E\setminus K_E)<\delta$ and  $\nu(D^i_k\setminus K^i_k)<\dfrac{\delta}{2m}$ for every $i\in\{1,2\}$ and every $k\in\{1,\dotsc,m\}$,
and thus, setting $K^i:=\bigcup\limits_{k=1}^m K^i_k$, $i=1,2$, and $\widehat{K}:=K^1\cup K^2$,
\[
\nu(D\setminus\widehat{K})=\sum_{i=1}^2\sum_{k=1}^m\nu(D^i_k\setminus K^i_k)<\delta.
\]
Again, by the regularity of $\nu$, there is an open set $U\supset F\cap D$ such that $\nu(U)<\delta$.
By Tietze's extension theorem, there is a $u\in B_{C_0(K)}$ such that
\[
u(t)=
\begin{cases}
z(t)&\quad\text{if $t\in K_E\cup F$;}\\
z(t)+1-\eps&\quad\text{if $t\in K^1\setminus U$;}\\
z(t)-1+\eps&\quad\text{if $t\in K^2\setminus U$.}
\end{cases}
\]

One has $u\in U$, because $|\delta_t(u-z)|=|u(t)-z(t)|=0$ for every $t\in F$ and $|\nu_j(u-z)|<\gamma$ for every $j\in\{1,\dotsc,n\}$.
Indeed, since
\begin{align*}
\int_{\widehat{K}\setminus U}(u-z)\,d\nu_j
&=\int_{K^1\setminus U}(u-z)\,d\nu_j+\int_{K^2\setminus U}(u-z)\,d\nu_j\\
&=(1-\eps)\nu_j(K^1\setminus U)-(1-\eps)\nu_j(K^2\setminus U)\\
&=(1-\eps)\bigl(\nu_j(K^1\setminus U)-\nu_j(K^2\setminus U)\bigr)
\end{align*}
and
\begin{align*}
|\nu_j&(K^1\setminus U)-\nu_j(K^2\setminus U)|\\
&\leq|\nu_j(K^1)-\nu_j(K^2)|+|\nu_j(K^1\cap U)|+|\nu_j(K^2\cap U)|\\
&\leq|\nu_j(D^1)-\nu_j(D^2)|+|\nu_j(D^1\setminus K^1)|+|\nu_j(D^2\setminus K^2)|+|\nu_j|(U)\\
&\leq\delta+|\nu_j|(D\setminus\widehat{K})|+\delta\\
&<3\delta,
\end{align*}
it follows that
\begin{align*}
\biggl|\int_{\widehat{K}}(u-z)\,d\nu_j\biggr|
&\leq\biggl|\int_{\widehat{K}\setminus U}(u-z)\,d\nu_j\biggr|+\int_{U}|u-z|\,d|\nu_j|\\
&\leq(1-\eps)3\delta+2|\nu_j|(U)\\
&<5\delta,
\end{align*}
and thus
\begin{align*}
|\nu_j(u-z)|
&=\biggl|\int_{\Omega}(u-z)\,d\nu_j\biggr|\leq\biggl|\int_{E}(u-z)\,d\nu_j\biggr|+\biggl|\int_{D}(u-z)\,d\nu_j\biggr|\\
&\leq\int_{E\setminus K_E}|u-z|\,d|\nu_j|+\int_{D\setminus\widehat{K}}|u-z|\,d|\nu_j|+\biggl|\int_{\widehat{K}}(u-z)\,d\nu_j\biggr|\\
&\leq2|\nu_j|(E\setminus K_E)+2|\nu_j|(D\setminus\widehat{K})+5\delta
<9\delta\\
&<\gamma.
\end{align*}

On the other hand, since, for every $t\in\widehat{K}\setminus U$,
\[
|u(t)|\geq1-\eps-|z(t)|>1-2\eps>\eps,
\]
one has $\widehat{K}\setminus U\subset B^u_0$ and thus
\begin{align*}
\mu(B^u_0)
&\geq\mu(\widehat{K}\setminus U)\geq\mu(\widehat{K})-\mu(U)>\mu(\widehat{K})-\delta\\
&=\mu(B)-\mu(B^z_0)-\mu(D\setminus\widehat{K})-\delta>\frac13-2\eps-\delta-\delta>\frac13-4\eps\\
&>2\eps.
\end{align*}
This proves that $u\notin\dfrac12S_1+\dfrac12S_2$.
\end{proof}

In particular, one has the following corollary from Theorem~\ref{thm: C_0(K) with K non-scattered} (and Remark \ref{rem: conv. comb. of slices in C_0(K) reach the sphere}),

\begin{cor}
The Banach space $C[0,1]$ has property (P3) but fails (P2).
\end{cor}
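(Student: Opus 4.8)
The plan is to deduce the corollary directly from Theorem~\ref{thm: C_0(K) with K non-scattered} together with Remark~\ref{rem: conv. comb. of slices in C_0(K) reach the sphere}, so the only real content is to place $C[0,1]$ in the framework of the theorem. First I would observe that $[0,1]$ is a compact (hence locally compact) Hausdorff space, so $C[0,1]=C_0([0,1])$, and that $[0,1]$ carries an atomless regular finite Borel measure, namely ordinary Lebesgue measure $\lambda$ restricted to $[0,1]$; regularity of Lebesgue measure on $[0,1]$ is classical, and atomlessness is immediate since $\lambda(\{t\})=0$ for every $t$. Thus Theorem~\ref{thm: C_0(K) with K non-scattered} applies with $K=[0,1]$ and produces a convex combination of slices of $B_{C[0,1]}$ with empty interior in the relative weak topology of $B_{C[0,1]}$; in particular $C[0,1]$ fails (P2).

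Next I would invoke Remark~\ref{rem: conv. comb. of slices in C_0(K) reach the sphere}, which asserts that every convex combination of slices of the unit ball of $C_0(K)$, for $K$ infinite, meets the unit sphere; since $[0,1]$ is infinite this gives that $C[0,1]$ has property (P3). Combining the two observations yields the corollary.

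If one wishes to be slightly more self-contained about property (P3) rather than merely citing the remark, the verification is short: given a convex combination $\sum_{i=1}^n\lambda_i S_i$ with $S_i=S(\mu_i,\alpha_i)$, choose (using that the $\mu_i$ are regular Borel measures on the infinite space $[0,1]$) a point $t_0\in[0,1]$ that is not an atom of any $\mu_i$ and a small neighbourhood of $t_0$ on which each $\mu_i$ has small total variation; then for each $i$ pick $x_i\in B_{C[0,1]}$ with $\mu_i(x_i)>1-\alpha_i$ agreeing off that neighbourhood but all attaining the value $1$ (or a common sign pattern) at $t_0$, so that the convex combination $\sum_i\lambda_i x_i$ still has norm one at $t_0$. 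This is routine and I would not grind through it, preferring the citation to Remark~\ref{rem: conv. comb. of slices in C_0(K) reach the sphere}.

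The statement is essentially a specialization, so there is no genuine obstacle; the only point requiring a (trivial) check is that Lebesgue measure on $[0,1]$ is an atomless \emph{regular} finite Borel measure, which legitimizes applying the theorem. Hence the proof is a two-line deduction.

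\begin{proof}
The interval $[0,1]$ is a compact Hausdorff space, so $C[0,1]=C_0([0,1])$, and Lebesgue measure on $[0,1]$ is an atomless regular finite Borel measure. Thus Theorem~\ref{thm: C_0(K) with K non-scattered} applies with $K=[0,1]$ and shows that $C[0,1]$ fails (P2). On the other hand, by Remark~\ref{rem: conv. comb. of slices in C_0(K) reach the sphere}, $C[0,1]$ has property (P3).
\end{proof}
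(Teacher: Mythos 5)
Your proof is correct and is exactly the paper's intended argument: the corollary is stated as an immediate consequence of Theorem~\ref{thm: C_0(K) with K non-scattered} (applied with $K=[0,1]$ and Lebesgue measure) together with Remark~\ref{rem: conv. comb. of slices in C_0(K) reach the sphere}, which is precisely what you do. Your optional sketch of the direct verification of (P3) is a harmless bonus and matches the paper's remark that this is ``immediately verified.''
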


\section*{Acknowledgements}
We thank Trond A.~Abrahamsen and Vegard Lima for helpful communications.

\bibliographystyle{amsplain}
\footnotesize

\begin{thebibliography}{10}

\bibitem{AL}
{T}.~{A}.~{A}brahamsen and {V}.~{L}ima, 
\emph{{R}elatively weakly open convex combinations of slices}, {\tt arXiv:1701.} {\tt 06169v1 [math.FA]} (2017).
%
\bibitem{Aliprantis&}
{C}.~{D}.~{A}liprantis and {K}.~{C}.~{B}order,
\emph{Infinite dimensional analysis}, A hitchhiker's guide,
Springer, Berlin,
2006.
%
\bibitem{bonsall_numerical_1973}
{F}.~{F}.~{B}onsall and {J}.~{D}uncan, \emph{Numerical Ranges II}, London Mathematical Society
Lecture Notes Series 10, Cambridge University Press, New York, 1973.
%
\bibitem{ggms}
{N}.~{G}houssoub, {G}.~{G}odefroy, {B}.~{M}aurey, {W}. {S}chachermayer,
\emph{{S}ome topological and geometrical structures in {B}anach spaces},
Mem.~Amer.~Math.~Soc. {\bf 70} (1987), no.~378, {iv+116}.

\bibitem{HPP}
R.~Haller, K.~Pirk, and M.~P\~{o}ldvere, \emph{Diametral strong diameter two property of Banach spaces is stable under direct sums with 1-norm}, Acta Comment. Univ. Tartu. Math. \textbf{20} (2016), 101--105.

\bibitem{Knowles}
{J}.~{D}.~{K}nowles,
\emph{On the existence of non-atomic measures},
Mathematika \textbf{14} (1967), 62--67.

\bibitem{Luther}
{N}.~{Y}.~{L}uther, \emph{Weak denseness of nonatomic measures on perfect, locally compact spaces}, Pacific J. Math. \textbf{34}
(1970), 453--460.

\bibitem{Pel&Sem}
{A.}~{P}e\l czy\'nski and {Z}.~{S}emadeni,
\emph{Spaces of continuous functions. III. Spaces $C(\Omega )$ for
$\Omega $ without perfect subsets},
Studia Math.\textbf{18}
(1959), 211--222.

\bibitem{Rudin}
{W}.~{R}udin,
\emph{Continuous functions on compact spaces without perfect subsets}, Proc. Amer. Math. Soc. \textbf{8} (1957), 39--42.


%
%
%
%
%
%
%
%
%
%

\end{thebibliography}

{\providecommand{\noopsort}[1]{}}
\providecommand{\bysame}{\leavevmode\hbox to3em{\hrulefill}\thinspace}
\providecommand{\MR}{\relax\ifhmode\unskip\space\fi MR }
\providecommand{\MRhref}[2]{%
  \href{http://www.ams.org/mathscinet-getitem?mr=#1}{#2}
}
\providecommand{\href}[2]{#2}

\end{document}